\documentclass[11pt]{article}

\usepackage{array}
\usepackage{multirow}

\makeatletter
\newcommand{\thickhline}{%
    \noalign {\ifnum 0=`}\fi \hrule height 1pt
    \futurelet \reserved@a \@xhline
}
\newcolumntype{"}{@{\hskip\tabcolsep\vrule width 1pt\hskip\tabcolsep}}
\makeatother

\usepackage{graphics}
\usepackage{epsfig}
\usepackage{subfigure}
\usepackage{amssymb}
%% The amsthm package provides extended theorem environments
\usepackage{amsthm}
\usepackage{mathrsfs}
\usepackage{hhline}
\usepackage{longtable}
\usepackage{amsmath}
\usepackage{array}
\usepackage{float}
\usepackage{picinpar}%for window
\usepackage{multirow}
\usepackage{cite}
\usepackage{enumerate}
\usepackage{xcolor}
%make line number
\usepackage[mathlines]{lineno}
%\linenumbers\pagewiselinenumbers
\modulolinenumbers[2]
\newcommand*\patchAmsMathEnvironmentForLineno[1]{%
\expandafter\let\csname old#1\expandafter\endcsname\csname #1\endcsname  \expandafter\let\csname oldend#1\expandafter\endcsname\csname end#1\endcsname  \renewenvironment{#1}%
{\linenomath\csname old#1\endcsname}%
{\csname oldend#1\endcsname\endlinenomath}}%
\newcommand*\patchBothAmsMathEnvironmentsForLineno[1]{%
\patchAmsMathEnvironmentForLineno{#1}%
\patchAmsMathEnvironmentForLineno{#1*}}%
\AtBeginDocument{%
\patchBothAmsMathEnvironmentsForLineno{equation}%
\patchBothAmsMathEnvironmentsForLineno{align}%
\patchBothAmsMathEnvironmentsForLineno{flalign}%
\patchBothAmsMathEnvironmentsForLineno{alignat}%
\patchBothAmsMathEnvironmentsForLineno{gather}%
\patchBothAmsMathEnvironmentsForLineno{multline}%
}

%*************************Shiu

\setlength{\textwidth}{16cm}%{11.4cm}%
\setlength{\textheight}{22.5cm}%{18.1cm}%
\setlength{\topmargin}{0cm}%{-0.5in}
\setlength{\voffset}{0in}
\setlength{\oddsidemargin}{0cm}
\setlength{\evensidemargin}{0cm}
\setlength{\unitlength}{1mm}
%\pagestyle{empty}
%\setlength{\textheight}{23cm}
%\setlength{\textwidth}{15cm}
%\setlength{\topmargin}{-1cm}
%\setlength{\oddsidemargin}{6mm}

%\font\small=cmbx6

%%%%%%%%%%%%%%%%%%%%%%%%%%
\newtheorem{theorem}{Theorem}[section]
\newtheorem{lemma}[theorem]{Lemma}%[section]
\newtheorem{example}{Example}[section]
\newtheorem{corollary}[theorem]{Corollary}%[section]
\newtheorem{problem}{Problem}[section]

\newtheorem{conjecture}{Conjecture}[section]

\numberwithin{equation}{section}

\newtheorem{defi}{Definition}[section]

%\newtheorem{prelemma}{{\bf Lemma}}
%\renewcommand{\theprelemma}{{\arabic{prelemma}}}
%\newenvironment{lemma}{\begin{prelemma}{\hspace{-0.5
%em}{\bf.}}}{\end{prelemma}}

%\newtheorem{prethm}{{\bf Theorem}}
%\renewcommand{\theprethm}{{\arabic{prethm}}}
%\newenvironment{theorem}{\begin{prethm}{\hspace{-0.5
%em}{\bf.}}}{\end{prethm}}

%%%%%%%%%%%%%%%%%%%%%%%%%%%

\usepackage{graphicx}
\graphicspath{%
    {converted_graphics/}% inserted by PCTeX
    {/}% inserted by PCTeX
}

\begin{document}
\baselineskip18truept
\normalsize
\begin{center}
{\mathversion{bold}\Large \bf Affirmative Solutions On Local Antimagic Chromatic Number}

\bigskip
{\large  Gee-Choon Lau{$^{a,}$}\footnote{Corresponding author.}, Ho-Kuen Ng{$^b$}, Wai-Chee Shiu{$^c$}}\\

\medskip

\emph{{$^a$}Faculty of Computer \& Mathematical Sciences,}\\
\emph{Universiti Teknologi MARA (Segamat Campus),}\\
\emph{85000, Johor, Malaysia.}\\
\emph{geeclau@yahoo.com}\\

\medskip

\emph{{$^b$}Department of Mathematics, San Jos\'{e} State University,}\\
\emph{San Jos\'e CA 95192 USA.}\\
\emph{ho-kuen.ng@sjsu.edu}\\

\medskip

%{\large Wai-Chee Shiu\footnote{E-mail: wcshiu@math.hkbu.edu.hk}}\\
\emph{{$^c,$\footnote{The third author was supported by Tianjin Research Program of Application Foundation and Advanced Technology (No.14JCYBJC43100) for visiting Tianjin University of Technology and Education.}}Department of Mathematics, The Chinese University of Hong Kong, Hong Kong, P.R. China,}\\
\emph{College of Global Talents, Beijing Institute of Technology, Zhuhai, P.R. China.}\\
% Department of Mathematics, The Chinese University of Hong Kong, Hong Kong, P.R. China;\\
 %           College of Global Talents, Beijing Institute of Technology, Zhuhai, P.R. China\\
\emph{wcshiu@associate.hkbu.edu.hk}\\

\end{center}

%{\blue Shiu}\quad {\red Lau}\quad {\violet Shiu 2}\\ {\magenta Shiu or Question} \quad {\cyan Maybe no use}

%\maketitle

\begin{abstract}
An edge labeling of a connected graph $G = (V, E)$ is said to be local antimagic if it is a bijection $f:E \to\{1,\ldots ,|E|\}$ such that for any pair of adjacent vertices $x$ and $y$, $f^+(x)\not= f^+(y)$, where the induced vertex label $f^+(x)= \sum f(e)$, with $e$ ranging over all the edges incident to $x$.  The local antimagic chromatic number of $G$, denoted by $\chi_{la}(G)$, is the minimum number of distinct induced vertex labels over all local antimagic labelings of $G$. In this paper, we give counterexamples to the lower bound of $\chi_{la}(G \vee O_2)$ that was obtained in [Local antimagic vertex coloring of a graph, Graphs and Combin., 33 : 275 - 285 (2017)]. A sharp lower bound of $\chi_{la}(G\vee O_n)$ and sufficient conditions for the given lower bound to be attained are obtained. Moreover, we settled Theorem 2.15 and solved Problem 3.3 in the affirmative. We also completely determined the local antimagic chromatic number of complete bipartite graphs.

% Include keywords, PACS and mathematical subject classification numbers as needed.
\noindent Keywords: Local antimagic labeling, Local antimagic chromatic number

% \PACS{PACS code1 \and PACS code2 \and more}
\noindent 2010 AMS Subject Classifications: 05C78; 05C69.
\end{abstract}

\section{Introduction}\label{intro}
A connected graph $G = (V, E)$ is said to be local antimagic if it admits a local antimagic edge labeling, i.e., a bijection $f : E \to \{1,\ldots ,|E|\}$ such that the induced vertex labeling  $f^+ : V \to \mathbb Z$ given by $f^+(x) = \sum f(e)$ (with $e$ ranging over all the edges incident to $x$) has the property that any two adjacent vertices have distinct induced vertex labels.  The number of distinct induced vertex labels under $f$ is denoted by $c(f)$, and is called the {\it color number} of $f$. The {\it local antimagic chromatic number} of $G$, denoted by $\chi_{la}(G)$, is $\min\{c(f) : f\mbox{ is a local antimagic labeling of } G\}$. In~\cite{Haslegrave}, Haslegrave proved that the local antimagic chromatic number is well-defined for every connected graph other than $K_2$. Thus, for every connected graph $G\not=K_2$, $\chi_{la}(G)\ge \chi(G)$, the chromatic number of $G$. %that every connected graph $G$ admits a local antimagic edge labeling except $G=K_2$. \\

For any graph $G$, the graph $H=G \vee O_n$, $n\ge 1$,  is defined by  $V(H)=V(G)\cup\{v_i: 1\le i\le n\}$ and  $E(H)=E(G)\cup\{uv_i : u \in V(G)\}$. In~\cite[Theorem 2.16]{Arumugam}, it was claimed that for any $G$ with order $m\ge4$, $$\chi_{la}(G) + 1 \le \chi_{la}(G\vee O_2) \le \begin{cases}
\chi_{la}(G)+1 & \mbox{if } m \mbox{ is even,} \\
\chi_{la}(G) + 2 & \mbox{if } m \mbox{ is odd}.
\end{cases}$$

In Section 2, we give counterexamples to the above lower bound for each $m\ge 3$. A sharp lower bound is then given. Moreover, sufficient conditions for the above lower bound to be attained are also presented. In Section 3, we settled~\cite[Theorem~2.15]{Arumugam} and solved  \cite[Problem~3.3]{Arumugam} in the affirmative. In Section 4, we completely determined the local antimagic chromatic number of complete bipartite graphs.

\section{Counterexamples and sharp bound}

In this section, we will make use of the existence of magic rectangles. From \cite{Chai, Reyes}, we know that a $h\times k$ magic rectangle exists when $h,k\ge 2$, $h\equiv k\pmod{2}$ and $(h,k)\ne (2,2)$. For $a,b\in \mathbb{Z}$ and $a\le b$, we use $[a,b]$ to denote the set of integers from $a$ to $b$. We first introduce some notation about matrices.\\

Let $m,n$ be two positive integers. For convenience, we use $M_{m,n}$ to denote the set of $m\times n$ matrices over $\mathbb{Z}$. For any matrix $M\in M_{m,n}$, let $r_i(M)$ and $c_j(M)$ denote the $i$-th row sum and the $j$-th column sum of $M$, respectively.\\

We shall assign the integers in $[1, q+r+qr]$ to matrices $PR\in M_{1,r}$, $QR\in M_{q,r}$ and $QP=(PQ)^T\in M_{q,1}$ such that the matrix
\begin{equation*}
M=\begin{pmatrix}
* & PR\\
QP & QR\end{pmatrix}\end{equation*}
has the following properties:
\begin{enumerate}[P.1]
\item Each integer in $[1, q+r+qr]$ appears once.
\item $r_{i+1}(M)$ is a constant not equal to $r_1(M)+c_1(M)$, $1\le i\le q$.
\item $c_{j+1}(M)$ is a constant not equal to $r_{i+1}(M)$ or $r_1(M)+c_1(M)$, $1\le j\le r$.
\end{enumerate}

Let $\{u\}$, $\{v_1, v_2, \ldots, v_q\}$ and $\{w_1, w_2, \ldots, w_r\}$ be the three independent vertex set of $K(1,q,r)$, $r\ge q\ge 2$. For $1\le  i\le q, 1\le j\le r$, let the $i$-entry of $QP$ be the edge label of $uv_i$, the $j$-entry of $PR$ be the edge label of $uw_j$, the $(i,j)$-entry of $QR$ be the edge label of $v_iw_j$. It follows that $r_1(M) + c_1(M)$ is the sum of all the incident edge labels of $u$, $r_{i+1}(M)$ is the sum of all the incident edge labels of $v_i$, and $c_{j+1}(M)$ is the sum of all the incident edge labels of $w_j$. Thus, $M$ corresponds to a local antimagic labeling of $K(1,q,r)$ with $\chi_{la}(K(1,q,r))=3$ if such $M$ exists.

\begin{theorem}\label{thm-K12r} For $r\ge 2$, $\chi_{la}(K(1,2,r))=3$. \end{theorem}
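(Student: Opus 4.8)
The plan is to reduce the theorem to the construction promised by the paragraph preceding it: by that discussion, any matrix $M$ with $q=2$ satisfying P.1--P.3 encodes a local antimagic labeling of $K(1,2,r)$ whose only induced colors are $r_1(M)+c_1(M)$, the common value $r_2(M)=r_3(M)$, and the common value $c_{j+1}(M)$, hence exactly three colors. The matching lower bound is free: $K(1,2,r)$ contains the triangle $uv_1w_1$, so $\chi(K(1,2,r))=3$, and since every induced labeling $f^+$ is a proper vertex coloring we always have $c(f)\ge\chi=3$. Thus it suffices to exhibit such an $M$ for every $r\ge 2$.

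To build $M$ (a $3\times(r+1)$ array with empty corner) I read its lower $2\times r$ part as $QR$ and its top row as $PR$. Since the color of $w_j$ is $PR_j$ plus the $j$-th column sum of $QR$, I will force all $w_j$ equal (P.3) by arranging the $3\times r$ block $\left(\begin{smallmatrix}PR\\ QR\end{smallmatrix}\right)$ to have all $r$ column sums equal to a single value $S_w$; since the color of $v_i$ is $QP_i$ plus the $i$-th row sum $\rho_i$ of $QR$, I will force $v_1=v_2$ (P.2) by choosing the two entries of $QP$ so that $QP_1-QP_2=\rho_2-\rho_1$. So the construction amounts to: pick the pair $QP$, then realize the remaining $3r$ labels as a column-balanced $3\times r$ array whose two lower row sums differ by exactly $|QP_1-QP_2|$.

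Because equal column sums require $r$ to divide the total of the $3r$ block-labels, and because $|QP_1-QP_2|$ must equal the gap between the two lower row sums, the admissible choice of $QP$ (and hence which labels land in the block) depends on the parity of $r$; I will therefore split into $r$ odd and $r$ even, and dispose of the smallest case $r=2$ by the explicit matrix $\left(\begin{smallmatrix}*&1&5\\ 6&7&2\\ 8&3&4\end{smallmatrix}\right)$, whose three colors are $20,15,11$. For larger $r$ I obtain the column-balanced block either from a $3\times r$ magic rectangle (available when $r$ is odd) or from an explicit arrangement built on a $2\times r$ magic rectangle together with the compensating row $PR$ (when $r$ is even), in both cases then exchanging two entries in a common column to open the prescribed gap between the two lower row sums without disturbing any column sum. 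The main obstacle is reconciling three requirements at once---constant column sums, a lower-row-sum gap equal to $|QP_1-QP_2|$, and the three colors $S_u=r_1+c_1$, $S_v=r_2=r_3$, $S_w=c_{j+1}$ being pairwise distinct (the inequalities hidden in P.2 and P.3)---and this is precisely where the parity casework becomes unavoidable; once a valid placement is fixed, property P.1 reduces to checking that $1,\dots,3r+2$ each appear exactly once.
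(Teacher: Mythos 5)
Your framing is sound and matches the paper's: the lower bound $\chi_{la}(K(1,2,r))\ge\chi(K(1,2,r))=3$ is correct (the paper takes it for granted in the paragraph before the theorem), the reduction to a matrix $M$ satisfying P.1--P.3 is exactly the intended mechanism, your reading of the colors as $r_1+c_1$, $QP_i+\rho_i$ and $PR_j+(\text{$j$-th column sum of }QR)$ is right, and your $r=2$ base case with colors $20,15,11$ checks out. The problem is that everything after that is a plan rather than a proof, and the plan has holes precisely where the work lives. For odd $r$, starting from a $3\times r$ magic rectangle on $[1,3r]$ (so all three row sums equal $r(3r+1)/2$) with $QP=\{3r+1,3r+2\}$, you must open a gap of exactly $1$ between the two lower row sums by ``exchanging two entries in a common column.'' An exchange between the two $QR$-rows changes that gap by an even amount, so it can never produce $1$; the exchange must involve the $PR$ row and a column containing two entries that differ by exactly $1$, and you nowhere establish that the magic rectangles of \cite{Chai} contain such a column. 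For even $r$, the phrase ``built on a $2\times r$ magic rectangle together with the compensating row $PR$'' cannot mean what it literally says: if $QR$ is itself a magic rectangle its column sums are already constant, so a column-balanced $3\times r$ block would force $PR$ to be a constant row, which is impossible with distinct labels. You also defer, without carrying out, the verification that the three resulting color values are pairwise distinct.

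For comparison, the paper resolves exactly these difficulties but with the parity roles reversed relative to your sketch. For \emph{even} $r\ge 4$ it takes a $3\times(r+1)$ magic rectangle (both dimensions odd, so it exists), moves the entry $3(r+1)$ to the $(1,1)$ position, and deletes that corner: the remaining first row becomes $PR$, the remaining first column becomes $QP$, and the $2\times r$ block becomes $QR$. This automatically gives $r_2(M)=r_3(M)$ (untouched magic-rectangle row sums) and constant $c_{j+1}(M)$, and one only checks the three values $\frac{(r+4)(3r+4)}{2}-6(r+1)$, $\frac{(r+1)(3r+4)}{2}$, $\frac{3(3r+4)}{2}$ are distinct. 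For \emph{odd} $r\ge 5$ no such rectangle is available in the needed shape, and the paper writes down fully explicit label tables, split into the subcases $r\equiv 1\pmod 4$ and $r\equiv 3\pmod 4$, with small ad hoc label swaps, and verifies the three sums directly. Until you either prove the existence of the consecutive-entry column you need, or replace your even-$r$ construction with something that actually produces a column-balanced block with unequal lower row sums (the paper's deleted-corner trick is one way), the proposal does not constitute a proof.
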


\begin{proof} Suppose $r$ is even. If $r=2$, a required labeling is given by
\begin{equation*}
M=\begin{pmatrix}
* & 1 & 5\\
6 & 7 & 2\\
8 & 3 & 4
\end{pmatrix}
\end{equation*}

Consider $r\ge 4$. Let $A$ be a $3\times (r+1)$ magic rectangle. Exchanging columns and exchanging rows if necessary so that $3(r+1)$ is put at the $(1,1)$-entry of $A$. Now, $M$ is obtained by letting $PR$ be the $1\times r$ matrix obtained from the first row of $A$ by deleting the $(1,1)$-entry; letting $QP$ be the $3\times 1$ matrix obtained from the first column of $A$ by deleting the $(1,1)$-entry; and letting $QR$ be the $2\times r$ matrix obtained from $A$ by deleting the first row and the first column.

It is easy to check that $c_1(M)+r_1(M)=\frac{(r+4)(3r+4)}{2}-6(r+1)$ $\not=$ $r_2(M)=r_3(M)=\frac{(r+1)(3r+4)}{2}$ $\not=$ $c_{j+1}=\frac{3(3r+4)}{2}$, $1\le j\le r$.

Suppose $r$ is odd. If $r=3$, a required labeling is given by
\begin{equation*}
M=\begin{pmatrix}
* & 2 & 4 & 6\\
8 & 5 & 11 & 3\\
10 & 9 & 1 & 7
\end{pmatrix}
\end{equation*}

Consider $r\ge 5$. Now for $r \equiv 1 \pmod{4}$, let $r = 4s + 1, s \ge 1$. The entries of a required labeling matrix $M$ is given in tabular form as follows.

%%%%%%%%%%%%%
%\vskip-0.5cm
\begin{table}[H]
\footnotesize
$PR=$
      \centering
\setlength{\extrarowheight}{1pt}
\setlength{\tabcolsep}{2.5pt}
\begin{tabular}{|c|c|c|c|c|c"c"c|c|c|c|c|c|}
\hline
$1$  & $3$ & $\cdots$  & $4s-5$ &  $4s-3$ & $4s-1$ & $10s+3$ &  $2$ & $4$ &  $\cdots$ & $4s-4$ & $4s-2$ & $4s$ \\
\hline
\end{tabular}
     \centering
\end{table}
\vskip-0.5cm
 \begin{table}[H]
\footnotesize
$QP+QR=$
      \centering
\setlength{\extrarowheight}{1pt}
\setlength{\tabcolsep}{0.8pt}
\begin{tabular}{|c"c|c|c|c|c|c"c"}
\hline
$12s+5$ & $12s+3$ & $6s+1$ & $\cdots$ &  $4s+5$  & $10s+5$ & $4s+3$ & $4s+1$  \\
\hline
$12s+4$ & $6s+2$ & $12s+2$ & $\cdots$ & $10s+6$ & $4s+4$ & $10s+4$ & $4s+2$   \\
\hline
\end{tabular}
%     \centering
\end{table}
\vskip-0.5cm
 \begin{table}[H]
\footnotesize
%$QP+QR=$
      \centering
\setlength{\extrarowheight}{1pt}
\setlength{\tabcolsep}{0.8pt}
\begin{tabular}{"c|c|c|c|c|c|}
\hline
 $10s+2$ & $8s+1$ & $\cdots$ &  $6s+5$ & $8s+4$ & $6s+3$ \\
\hline
 $8s+2$ & $10s+1$ & $\cdots$ & $8s+5$ & $6s+4$ & $8s+3$ \\
\hline
\end{tabular}
%     \centering
\end{table}

%%%%%%%%%%%%%

 Clearly, we get $c_1(M)+r_1(M)=8s^2+36s+12$ $\not=$ $r_2(M)=r_3(M)=32s^2+27s+6$ $\not=$ $c_{j+1}(M)=18s+6$, $1\le j\le r$.\\ \\ %Each vertex in $A, B$, and $C$ has label $$, $$, and $$ respectively.\\

%%%%%%%%%%%%%%%%%%%%%%%%%%%%%%%%%%%%%%%%%%%%

Finally for $r \equiv 3 \pmod{4}$, let $r = 4s + 3, s \ge 1$. To get $PR$, we assign $2k-1$ to column $k$ if $1\le k \le 2s+2$, and assign $2k-4s-4$ to column $k$ if $2s+3\le k\le 4s+3$. For row 1 of $QR$, we assign $6s+6-k$ to column $k$ if $k=1,3,5,\ldots, 2s+1$; assign $12s+10-k$ to column $k$ if $k=2,4,6,\ldots,2s+2$; assign $10s+9-k$ to column $k$ if $k=2s+3, 2s+5, 2s+7, \ldots, 4s+3$; and assign $12s+10-k$ to column $k$ if $k=2s+4, 2s+6, 2s+8, \ldots, 4s+2$. For row 2 of $QR$, we assign $12s+10-k$ to column $k$ if $k=1,3,5,\ldots,2s+1$; assign $6s+6-k$ to column $k$ if $k=2,4,6,\ldots,2s+2$; assign $12s+10-k$ to column $k$ if $k=2s+3, 2s+5, 2s+7, \ldots, 4s+3$; and assign $10s+9-k$ to column $k$ if $k=2s+4, 2s+6, 2s+8, \ldots, 4s+2$. For $QP$, the two entries are $12s+10$ and $12s+11$. Lastly, we exchange the labels $4s-1$ and $4s+6$; the labels $4s-2$ and $6s+8$; and the labels $4s+2$ and $8s+7$. The resulting matrices are given by the following tables.
%\vskip-0.5cm
\begin{table}[H]
\footnotesize
$PR=$
      \centering
\setlength{\extrarowheight}{1pt}
\setlength{\tabcolsep}{2.pt}
\begin{tabular}{|c|c|c|c|c|c|c|c"c|c|c|c|c|c|c|}
\hline
1& 3& 5& $\cdots$ & $4s-3$ & \mathversion{bold}${4s+6}$ & $4s+1$ & $4s+3$ &  2 & 4 & $\cdots$ & $4s-4$ & \mathversion{bold} $6s+8$ & $4s$ & \mathversion{bold} $8s+7$\\
\hline
\end{tabular}
     \centering
\end{table}
\vskip-0.5cm

 \begin{table}[H]
\hskip2cm\footnotesize
$QP+QR=$
 %     \centering
\setlength{\extrarowheight}{1pt}
\setlength{\tabcolsep}{0.5pt}
\begin{tabular}{|c"c|c|c|c|c|c|c|c"}
\hline
$12s+10$ & $6s+5$ & $12s+8$ & $6s+3$ & $\cdots$ & $4s+7$ & $10s+10$ & $4s+5$ & $10s+8$   \\
\hline
$12s+11$ & $12s+9$ & $6s+4$ & $12s+7$ & $\cdots$ & $10+11$ & \mathversion{bold} $4s-1$ & $10s+9$ & $4s+4$   \\
\hline
\end{tabular}
 %    \centering
\end{table}
\vskip-0.5cm%\hskip1cm
 \begin{table}[H]
\footnotesize
%$QP+QR=$
    \centering
\setlength{\extrarowheight}{1pt}
\setlength{\tabcolsep}{0.6pt}
\begin{tabular}{"c|c|c|c|c|c|c|}
\hline
$8s+6$ & $10s+6$ &  $\cdots$ & $8s+10$ & \mathversion{bold} $4s-2$ & $8s+8$ & $6s+6$ \\
\hline
 $10s+7$ & $8s+5$  & $\cdots$ & $6s+9$ & $8s+9$ & $6s+7$ & \mathversion{bold}$4s+2$ \\
\hline
\end{tabular}
   \centering
\end{table}
\vskip-0.5cm
%According to the above tables, we know vertex $u$ has label $8s^2+38s+27$ and vertex $w_j, 1\le j\le r$ has label $18s+15$. However, vertex $v_1$ and $v_2$ have labels $y_1=32s^2+61s+29$ and $y_2=32s^2+63s+31$ respectively.  To make $y_1 = y_2$, we perform the following exchanges.  Note that none of these exchanges would modify the vertex labels of $w_j$. %Only the value of sum of entries in $PR$ and in $QP$  would be changed.

%\begin{enumerate}[(a)]
 % \item Interchange the labels $4s-2$ and $6s+8$.  The value of $y_1$ is decreased by $2s+10$.
 % \item Interchange the labels $4s-1$ and $4s+6$.  The value of $y_2$ is decreased by 7.
 % \item Interchange the labels $4s+2$ and $8s+7$.  The value of $y_2$ is decreased by $4s+5$.
%\end{enumerate} In total, the value of $y_1$ is decreased by $2s+10$, and the value of $y_2$ is decreased by $4s+12$.

Thus, we now have a required $M$ with $c_1(M)+r_1(M)=8s^2 + 44s + 49$ $\not=$ $r_2(M) = r_3(M)=32s^2+59s+19$ $\not=$ $c_{j+1}(M)=18s+15, 1\le j\le r$.   \end{proof}

Observe that $K(1,2,r) = K(1,r) \vee O_2$. Obviously, $\chi_{la}(K(1,r)) = r+1, r\ge 2$.

\begin{corollary}\label{cor-ctreg} For each $m\ge 3$, there exists a graph $G$ of order $m$ such that $\chi_{la}(G\vee O_2)  - \chi_{la}(G) = 3-m \le 0$. \end{corollary}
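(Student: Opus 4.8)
The plan is to exhibit a single explicit family of witness graphs, namely the stars, and to read off the claimed difference directly from Theorem~\ref{thm-K12r}. Given $m\ge 3$, I would set $r=m-1\ge 2$ and take $G=K(1,r)$, the star on $m=r+1$ vertices. The entire argument then reduces to identifying $G\vee O_2$ with a complete tripartite graph and comparing two chromatic numbers that are already in hand.

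The key steps, in order, are as follows. First, observe (as recorded just before the statement) that joining the star $K(1,r)$ with two independent vertices produces exactly $K(1,2,r)$: the two new apex vertices, together with the centre and the $r$ leaves of the star, furnish the three parts of sizes $1$, $2$, and $r$. Second, invoke Theorem~\ref{thm-K12r}, which applies since $r\ge 2$, to conclude that $\chi_{la}(G\vee O_2)=\chi_{la}(K(1,2,r))=3$. Third, use the elementary equality $\chi_{la}(K(1,r))=r+1$: in any local antimagic labeling of a star the $r$ leaves are forced to receive the $r$ distinct edge labels, while the centre receives their total $r(r+1)/2$, which for $r\ge 2$ exceeds $r$ and is therefore a new colour, giving exactly $r+1$ colours. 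Finally, subtract; since $m=r+1$,
$$\chi_{la}(G\vee O_2)-\chi_{la}(G)=3-(r+1)=3-m,$$
and this is $\le 0$ precisely because $m\ge 3$.

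There is no genuine obstacle once the right witness is chosen, as the substantive content lies entirely in Theorem~\ref{thm-K12r}. The only point demanding care is the bookkeeping of orders: because $G$ must have order $m$, the star must carry $r=m-1$ leaves rather than $r=m$, and one must then verify that the hypothesis $r\ge 2$ of the theorem corresponds exactly to the stated range $m\ge 3$. Everything else is the single substitution $m=r+1$, so I expect the write-up to be only a few lines.
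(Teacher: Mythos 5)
Your proposal is correct and coincides with the paper's own argument: the corollary is obtained exactly by taking $G=K(1,m-1)$, noting $K(1,r)\vee O_2=K(1,2,r)$, and combining Theorem~\ref{thm-K12r} with $\chi_{la}(K(1,r))=r+1$. Your justification of $\chi_{la}(K(1,r))=r+1$ and the bookkeeping $r=m-1\ge 2\iff m\ge 3$ are both accurate, so nothing further is needed.
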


Corollary~\ref{cor-ctreg} serves as counterexamples to the lower bound of~\cite[Theorem 2.16]{Arumugam}. Interested readers may refer to~\cite{LauNgShiu-CM} for more general results on $\chi_{la}(K(p,q,r)), r\ge q\ge p\ge 1$. The next theorem gives a sharp lower bound of $\chi_{la}(G\vee O_n)$ for $n\ge 1$.

\begin{theorem} For $n\ge 1$, $\chi_{la}(G\vee O_n)\ge \chi(G) + 1$ and the bound is sharp.
\end{theorem}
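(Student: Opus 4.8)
The plan is to treat the two assertions separately, reducing the lower bound to a chromatic-number computation and importing the sharpness from Theorem~\ref{thm-K12r}. For the lower bound I would exploit the elementary but decisive fact that a local antimagic labeling is automatically a proper vertex coloring: coloring each vertex $x$ of $H=G\vee O_n$ by its induced label $f^+(x)$ is legitimate because adjacent vertices $x,y$ satisfy $f^+(x)\neq f^+(y)$, so distinct induced labels function as distinct colors. Hence every local antimagic labeling $f$ of $H$ satisfies $c(f)\ge \chi(H)$, and minimizing over all such $f$ gives the general inequality $\chi_{la}(H)\ge \chi(H)$. In this way the whole lower bound is reduced to evaluating $\chi(G\vee O_n)$.

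Next I would show $\chi(G\vee O_n)=\chi(G)+1$ for $n\ge 1$. In $H$ each new vertex $v_i$ is adjacent to every vertex of $G$, so in any proper coloring of $H$ the color of $v_i$ must differ from all colors appearing on $V(G)$; since the restriction of the coloring to $G$ already uses at least $\chi(G)$ colors, at least one further color is forced, giving $\chi(H)\ge \chi(G)+1$. Conversely, color $G$ optimally with $\chi(G)$ colors and assign a single new color to all of $v_1,\dots,v_n$, which is legitimate precisely because $O_n$ is edgeless and the $v_i$ are mutually non-adjacent; this produces a proper coloring of $H$ with exactly $\chi(G)+1$ colors. Combining the two paragraphs yields $\chi_{la}(G\vee O_n)\ge \chi(H)=\chi(G)+1$.

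For sharpness I would not attempt a fresh construction but instead invoke Theorem~\ref{thm-K12r}. Taking $G=K(1,r)$ with $r\ge 2$ and $n=2$, the observation $K(1,2,r)=K(1,r)\vee O_2$ gives $G\vee O_2=K(1,2,r)$, while $K(1,r)$ is bipartite, so $\chi(K(1,r))=2$. Theorem~\ref{thm-K12r} then supplies $\chi_{la}(G\vee O_2)=\chi_{la}(K(1,2,r))=3=\chi(G)+1$, exhibiting equality and confirming that the stated bound cannot be improved.

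The genuinely hard content lies entirely in the sharpness direction, and that work is already in place: the delicate case analysis and magic-rectangle constructions of Theorem~\ref{thm-K12r} are exactly what is needed to force equality. By contrast, the lower bound is routine once the link between local antimagic labelings and proper colorings is made explicit; the only point that requires a moment's care is the clean evaluation $\chi(G\vee O_n)=\chi(G)+1$, which hinges on the $v_i$ being non-adjacent so that they collectively consume exactly one extra color.
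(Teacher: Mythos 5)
Your lower-bound argument is exactly the paper's: a local antimagic labeling is by definition a proper coloring via the induced labels, so $\chi_{la}(H)\ge\chi(H)$, and $\chi(G\vee O_n)=\chi(G)+1$ because each $v_i$ dominates $V(G)$ while the $v_i$ are mutually non-adjacent. The paper compresses all of this into ``it is obvious,'' so your spelling it out is harmless. Where you genuinely diverge is the sharpness witness. You take $G=K(1,r)$ and invoke Theorem~\ref{thm-K12r} to get $\chi_{la}(K(1,r)\vee O_2)=3=\chi(K(1,r))+1$; this has the virtue of being self-contained within the paper, but it exhibits equality only for $n=2$. The paper instead cites \cite{LauNgShiu-chila} for $\chi_{la}(C_{2h}\vee O_{2k})=3=\chi(C_{2h})+1$ and $\chi_{la}(C_{2h-1}\vee O_{2k-1})=4=\chi(C_{2h-1})+1$, which together realize equality for \emph{every} $n\ge 1$ (even $n$ via the first family, odd $n$ via the second). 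Since the theorem is quantified ``for $n\ge 1$,'' the natural reading is that sharpness should be witnessed for each $n$, and under that reading your argument leaves a gap for all $n\ne 2$; under the weaker reading that the inequality merely cannot be improved in general, your single witness suffices. If you want to keep your proof independent of the external citation, you should at least acknowledge that you are only establishing sharpness at $n=2$, or supplement it (for instance, the later Theorems~\ref{thm-G+On} and~\ref{thm-Gr+On} of the paper, applied to graphs with $\chi_{la}=\chi$, would supply witnesses for other values of $n$).
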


\begin{proof} It is obvious that for $n\ge 1$, we have $\chi_{la}(G\vee O_n)\ge \chi(G\vee O_n)= \chi(G) + 1$. In~\cite{LauNgShiu-chila}, the authors obtained that for $h\ge 2, k\ge 1$, $\chi_{la}(C_{2h}\vee O_{2k}) = 3$ and $\chi_{la}(C_{2h-1}\vee O_{2k-1}) = 4$. Since $\chi(C_{2h}) = 2$ and $\chi(C_{2h-1}) = 3$, the bound is sharp.  \end{proof}

Observe that if $\chi_{la}(G)=\chi(G)$, then $\chi_{la}(G\vee O_n)\ge \chi(G\vee O_n) = \chi(G) + 1 = \chi_{la}(G) + 1$. Thus we have proved the sufficiency of the following conjecture.

\begin{conjecture}\label{conj-joint} For $n\ge 1$, $\chi_{la}(G\vee O_n)\ge\chi_{la}(G)+1$ if and only if $\chi_{la}(G)=\chi(G)$.  \end{conjecture}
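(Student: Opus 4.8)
The biconditional splits into two implications. The implication $\chi_{la}(G)=\chi(G)\Rightarrow\chi_{la}(G\vee O_n)\ge\chi_{la}(G)+1$ is already settled in the discussion preceding the statement, since $\chi_{la}(G\vee O_n)\ge\chi(G\vee O_n)=\chi(G)+1=\chi_{la}(G)+1$. So my plan concentrates on the reverse implication, which I would prove in contrapositive form. Recall that any local antimagic labeling induces a proper vertex coloring, hence $\chi_{la}(H)\ge\chi(H)$ for every $H$; in particular $\chi_{la}(G)\ge\chi(G)$, so ``$\chi_{la}(G)\ne\chi(G)$'' is the same as $\chi_{la}(G)\ge\chi(G)+1$. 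Thus it suffices to show that if $\chi_{la}(G)>\chi(G)$, then $\chi_{la}(G\vee O_n)\le\chi_{la}(G)$ for every $n\ge1$. Because the sharp lower bound just proved gives $\chi_{la}(G\vee O_n)\ge\chi(G)+1$, the cleanest route is in fact to aim for the stronger equality $\chi_{la}(G\vee O_n)=\chi(G)+1$, which automatically satisfies $\chi(G)+1\le\chi_{la}(G\vee O_n)\le\chi_{la}(G)$.

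To build a local antimagic labeling of $G\vee O_n$ with only $\chi(G)+1$ colors, I would reuse the block-matrix device of Section~2. Arrange the $n|V(G)|$ ``spoke'' edges joining $O_n$ to $V(G)$ as an $n\times|V(G)|$ array, treat the $|E(G)|$ internal edges separately, and label all $|E(G)|+n|V(G)|$ edges bijectively by $[1,|E(G)|+n|V(G)|]$. Since the apex vertices of $O_n$ form an independent set, it is enough to force their $n$ induced sums to a single common value (one color) and to force the induced sums on $V(G)$ to take exactly $\chi(G)$ values realizing an optimum proper coloring of $G$, all distinct from the apex value. The extra degree that each $G$-vertex acquires from the $n$ spokes is precisely what supplies the needed slack: as in Theorem~\ref{thm-K12r}, a magic-rectangle assignment on the spoke array makes every apex sum equal and contributes a controllable amount to each $G$-vertex, after which the internal labeling is used to collapse the $G$-vertices into $\chi(G)$ classes. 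Carrying this out would yield $c(f)=\chi(G)+1\le\chi_{la}(G)$, establishing the contrapositive.

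Two difficulties are the obstacle. First, the magic rectangles invoked from \cite{Chai,Reyes} exist only under the parity condition $h\equiv k\pmod2$ with $(h,k)\ne(2,2)$, so the spoke array cannot always be filled by an off-the-shelf magic rectangle; when the parities of $n$ and $|V(G)|$ clash one must patch the construction by hand, exactly the role played by the ad hoc label swaps in the $r\equiv3\pmod4$ case of Theorem~\ref{thm-K12r}. Second, and more seriously, the argument must simultaneously control the internal labels of an \emph{arbitrary} graph $G$ with $\chi_{la}(G)>\chi(G)$ so that its vertices merge into only $\chi(G)$ classes while the global labeling stays bijective and avoids the apex value. There is no reason a single uniform recipe handles every such $G$, and this is the genuine difficulty that keeps the assertion at the level of a conjecture. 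A realistic first target is therefore to establish the necessity for structured families where $\chi_{la}(G)>\chi(G)$ is already understood — stars, even cycles, complete bipartite graphs, and regular graphs admitting balanced labelings — and to search for an invariant that forces the collapse in full generality.
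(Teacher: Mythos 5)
The statement you are proving is stated in the paper as a \emph{conjecture}, and the paper itself proves only the sufficiency: the one-line observation $\chi_{la}(G\vee O_n)\ge \chi(G\vee O_n)=\chi(G)+1=\chi_{la}(G)+1$, which is exactly your first paragraph. So on that half you and the paper coincide. For the necessity, however, your proposal is a programme rather than a proof, and you say so yourself; the honest admission is correct, but it means the attempt does not establish the statement, and indeed the paper does not establish it either --- it only offers partial evidence (Theorems~\ref{thm-G+On} and~\ref{thm-Gr+On}, both of which still assume $\chi(G)=\chi_{la}(G)$, and the counterexamples $K(1,r)\vee O_2=K(1,2,r)$ of Theorem~\ref{thm-K12r}, which show the quantity $\chi_{la}(G\vee O_2)-\chi_{la}(G)$ can be as negative as $3-m$).

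Two concrete weaknesses in the necessity sketch are worth naming. First, you propose to prove the stronger equality $\chi_{la}(G\vee O_n)=\chi(G)+1$ whenever $\chi_{la}(G)>\chi(G)$; this is more than the contrapositive requires (only $\chi_{la}(G\vee O_n)\le\chi_{la}(G)$ is needed), and nothing in the paper suggests the join always drops all the way to $\chi(G)+1$, so you may be aiming at a target that is itself false even if the conjecture is true. Second, the magic-rectangle device on the spoke array only equalizes the $n$ apex sums and adds a (row-dependent, hence somewhat controllable) quantity to each vertex of $G$; since adjacent vertices of $G$ remain adjacent in $G\vee O_n$, you still must produce a bijective labeling of $E(G)$ (shifted by $n|V(G)|$ or not) whose induced sums, after the spoke contributions, realize a proper coloring of $G$ with few values. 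That residual task is essentially the original local antimagic problem for $G$ and is precisely where the paper's own constructions (Theorem~\ref{thm-K12r} for $K(1,2,r)$, the cycle results quoted from \cite{LauNgShiu-chila}) are bespoke rather than uniform. Your closing suggestion --- verify the necessity first on structured families where $\chi_{la}(G)>\chi(G)$ is understood --- is the right next step, but as written the proposal leaves the conjecture exactly where the paper leaves it: open in one direction.
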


In~\cite{Arumugam}, we have for $m\ge2$, $\chi(C_{2m-1}) = \chi_{la}(C_{2m-1}) = 3 = \chi_{la}(C_{2m})$ and $\chi(C_{2m})=2$. This provides a supporting evidence that the conjecture holds. \\

Let $a_{i,j}$ be the $(i,j)$-entry of a magic $(m,n)$-rectangle with row constant $n(mn+1)/2$ and column constant $m(mn+1)/2$. The following theorems partially answer Conjecture~\ref{conj-joint}.

\begin{theorem}\label{thm-G+On} Suppose $G$ is of order $m\ge 3$ with $m\equiv n\pmod{2}$ and $\chi(G)=\chi_{la}(G)$. If (i) $n\ge m$,  or (ii) $m \ge n^2/2$ and $n \ge 4$, then $\chi_{la}(G\vee O_n)= \chi_{la}(G) + 1$. \end{theorem}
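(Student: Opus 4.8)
The lower bound $\chi_{la}(G\vee O_n)\ge \chi_{la}(G)+1$ is already in hand from the observation preceding the theorem, since $\chi(G)=\chi_{la}(G)$. So the plan is to exhibit a single local antimagic labeling of $H=G\vee O_n$ attaining exactly $\chi_{la}(G)+1$ colors; together with the lower bound this gives equality.

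Write $u_1,\dots,u_m$ for the vertices of $G$, write $v_1,\dots,v_n$ for those of $O_n$, and set $e=|E(G)|$. I would start from an optimal local antimagic labeling $g:E(G)\to\{1,\dots,e\}$ whose induced coloring $g^+$ takes exactly $\chi_{la}(G)=\chi(G)$ distinct values; such a $g$ exists by hypothesis, and $g^+$ is automatically a proper coloring of $G$. The $mn$ join edges are then labeled from the magic $(m,n)$-rectangle $[a_{i,j}]$ by setting $f(u_iv_j)=a_{i,j}+e$, with $f$ agreeing with $g$ on $E(G)$. Because $m\equiv n\pmod 2$, $m\ge 3$ (so $(m,n)\ne(2,2)$), and $n\ge 2$ under either hypothesis, such a rectangle exists, and $f$ is a bijection onto $\{1,\dots,e+mn\}$.

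Next I would compute the induced labels. Each row of the shifted rectangle sums to $R:=\tfrac{n(mn+1)}{2}+ne$ and each column to $C:=\tfrac{m(mn+1)}{2}+me$, so $f^+(u_i)=g^+(u_i)+R$ and $f^+(v_j)=C$ for all $i,j$. Adding the constant $R$ is a bijection on values, so the $u_i$ still receive exactly $\chi(G)$ colors and adjacent vertices of $G$ stay distinctly colored, while the independent vertices $v_j$ all share the single color $C$. Hence $f$ is local antimagic with exactly $\chi(G)+1=\chi_{la}(G)+1$ colors, provided the only remaining adjacency is respected: $C\ne g^+(u_i)+R$ for every $i$, equivalently $C-R=(m-n)\left[\tfrac{mn+1}{2}+e\right]\ne g^+(u_i)$.

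This last verification is the crux, and it is exactly where the two hypotheses enter. Under (i), $n\ge m$ forces $C-R\le 0$, whereas $g^+(u_i)\ge 1$ (as $G$ is connected, every $u_i$ has positive degree), so the inequality is immediate. Under (ii), $m\ge n^2/2$ and $n\ge 4$ give $m>n$, hence $C-R>0$, and I would instead bound the largest induced label from above: since $g^+(u_i)$ is at most the sum of the $m-1$ largest labels, $g^+(u_i)\le (m-1)e-\binom{m-1}{2}$, and using $e\le\binom{m}{2}$ the desired strict inequality $C-R>\max_i g^+(u_i)$ reduces to $2m^2-mn^2+mn-3m-n+2>0$, which follows from $2m\ge n^2$ and $n\ge 4$. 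I expect this inequality to be the main obstacle: the coefficient of $e$ in $C-R$ is smaller than in the bound for $\max_i g^+(u_i)$, so the worst case is $e=\binom{m}{2}$ (that is, $G=K_m$), and the argument genuinely relies on the quadratic term $\tfrac{(m-n)(mn+1)}{2}$ supplied by $m\ge n^2/2$ to dominate.
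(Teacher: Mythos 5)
Your proposal is correct and follows essentially the same route as the paper: the same shift of the magic $(m,n)$-rectangle by $e$, the same row/column sum computation, the same immediate sign argument for case (i), and in case (ii) the same bounds $e\le\binom{m}{2}$ and $\max_i g^+(u_i)\le(m-1)e-\binom{m-1}{2}$ leading to the same polynomial inequality $2m^2+mn-3m-mn^2-n+2>0$, verified from $2m\ge n^2$ and $n\ge 4$. Your identification of the worst case $G=K_m$ and of where each hypothesis enters matches the paper's argument exactly.
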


\begin{proof} Let $G$ has size $e$ such that $V(G)=\{u_i : 1\le  i\le m\}$ and $V(O_n) = \{v_j : 1\le j\le n\}$. Suppose $f$ is a local antimagic labeling of $G$ that induces a $t$-coloring of $G$.\\

Define $g: E(G\vee O_n) \to [1,e+mn]$  by
\begin{align*}
g(uv) & = f(uv)\mbox{ for each $uv\in E(G)$},\\
g(u_iv_j) & = e + a_{i,j}\mbox{ for $1\le i\le m, 1\le j\le n$}.
\end{align*}

It is clear that $g$ is a bijection such that
\begin{align*}
g^+(u_i) & = f^+(u_i) + ne + n(mn+1)/2\mbox{ for } 1\le i\le m,\\
g^+(v_j) & = me + m(mn+1)/2 \mbox{ for } 1\le j\le n.
\end{align*}

(i) Suppose $n\ge m$, we have $g^+(u_i) > g^+(v_j)$ for all $i$ and $j$.\\

(ii) Suppose $m \ge n^2/2$ and $n \ge 4$. We proceed to show that $g^+(v_j) - g^+(u_i) = (m - n)e + (m - n)(mn + 1)/2 - f^+(u_i) > 0$ for all $i$ and $j$. Note that $e \le m(m - 1)/2$, and $f^+(u_i) \le e + (e - 1) + \cdots + (e - m + 2) = (2e - m + 2)(m - 1)/2$.  Thus, $2(g^+(v_j) - g^+(u_i))\ge 2(m - n)e + (m - n)(mn + 1) - (2e - m + 2)(m - 1) = 2(1 - n)e + (m - n)(mn + 1) + (m - 2)(m - 1) \ge (1 - n)m(m - 1) + (m - n)(mn + 1) + (m - 2)(m - 1) = 2m^2 + mn - 3m - mn^2 - n + 2 > m(2m - n^2) + m(n - 4) + (m - n) \ge 0$. \\

In either case, $g$ is a local antimagic labeling that induces a $(t+1)$-coloring of $G\vee O_n$. Hence, $\chi_{la}(G\vee O_n)\le \chi_{la}(G)+1$. Since $\chi_{la}(G\vee O_n)\ge \chi(G\vee O_n) = \chi(G)+1 = \chi_{la}(G)+1$, the theorem holds. \end{proof}

 Hence, we may assume that $m>n$.

\begin{theorem}\label{thm-Gr+On} Suppose $G$ is an $r$-regular graph of order $m\ge 3$ with $m\equiv n\pmod{2}$ and $\chi(G)=\chi_{la}(G)$.  If $m> n$ and $r\ge \frac{(m-n)(mn+1)}{2mn}$, then $\chi_{la}(G\vee O_n)= \chi_{la}(G) + 1$.
 \end{theorem}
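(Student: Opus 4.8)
The plan is to recycle the magic-rectangle construction of Theorem~\ref{thm-G+On}, but with the two blocks of labels \emph{interchanged}: the small labels go on the join edges and the large labels go on $G$. Let $G$ be $r$-regular of order $m$ and size $e=mr/2$, with $V(G)=\{u_i:1\le i\le m\}$ and $V(O_n)=\{v_j:1\le j\le n\}$. Fix an optimal local antimagic labeling $f$ of $G$ attaining $t=\chi_{la}(G)=\chi(G)$ colors, and let $(a_{i,j})$ be a magic $(m,n)$-rectangle, whose existence follows from $m\equiv n\pmod2$ and $m>n\ge 2$ (so in particular $(m,n)\ne(2,2)$). I would define $g:E(G\vee O_n)\to[1,e+mn]$ by $g(u_iv_j)=a_{i,j}$ and $g(xy)=f(xy)+mn$ for every edge $xy$ of $G$. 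Since the join edges receive the labels $[1,mn]$ and the edges of $G$ receive $[mn+1,mn+e]$, this $g$ is a bijection onto $[1,e+mn]$.

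Next I would compute the induced sums. Because $G$ is $r$-regular, each $u_i$ is incident to exactly $r$ edges of $G$, so shifting every $G$-label up by $mn$ adds the \emph{same} constant $r\,mn$ to each $f^+(u_i)$; together with the constant row sum $n(mn+1)/2$ and column sum $m(mn+1)/2$ of the magic rectangle this gives
\[
g^+(u_i)=f^+(u_i)+r\,mn+\tfrac{n(mn+1)}{2},\qquad g^+(v_j)=\tfrac{m(mn+1)}{2}.
\]
The decisive consequence of regularity is that the additive constant $r\,mn+n(mn+1)/2$ is identical for all $i$, whence $g^+(u_i)=g^+(u_{i'})$ if and only if $f^+(u_i)=f^+(u_{i'})$. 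Thus the restriction of $g$ to $G$ is again local antimagic and induces exactly the same $t$ color classes on $\{u_i\}$, while all $v_j$ share the single value $m(mn+1)/2$.

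The heart of the matter is to show this common value differs from every $g^+(u_i)$, and I would prove the stronger statement $g^+(u_i)>g^+(v_j)$ for all $i,j$. Indeed,
\[
g^+(u_i)-g^+(v_j)=f^+(u_i)+r\,mn-\tfrac{(m-n)(mn+1)}{2}\ge f^+(u_i)\ge 1>0,
\]
where the first inequality is precisely the hypothesis $r\,mn\ge (m-n)(mn+1)/2$, i.e.\ $r\ge\frac{(m-n)(mn+1)}{2mn}$. Hence $g$ is a local antimagic labeling of $G\vee O_n$ inducing exactly $t+1$ colors (the $t$ colors of the $u_i$ together with the new smallest color shared by the $v_j$), so $\chi_{la}(G\vee O_n)\le\chi_{la}(G)+1$; combined with the lower bound $\chi_{la}(G\vee O_n)\ge\chi(G\vee O_n)=\chi(G)+1=\chi_{la}(G)+1$ noted earlier, equality follows. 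The only real obstacle is the design choice at the outset: in Theorem~\ref{thm-G+On} the magic rectangle carries the large labels and one must bound $f^+(u_i)$ from \emph{above}, whereas here placing the large labels on $G$ lets regularity do double duty—it keeps the shift $r\,mn$ uniform (so the coloring of $G$ is preserved and the $u_i$ stay pairwise distinguished) and simultaneously supplies the boost $r\,mn$ that pushes every $g^+(u_i)$ past $g^+(v_j)$; checking that this boost dominates $(m-n)(mn+1)/2$ is exactly the stated lower bound on $r$.
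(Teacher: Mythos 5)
Your proposal is correct and follows essentially the same route as the paper's own proof: the identical labeling $g(u_iv_j)=a_{i,j}$, $g(xy)=f(xy)+mn$, the same induced sums, and the same use of $r\ge\frac{(m-n)(mn+1)}{2mn}$ to force $g^+(u_i)>g^+(v_j)$. The extra remarks about regularity preserving the color classes and about the existence of the magic rectangle only make explicit what the paper leaves implicit.
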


\begin{proof} Let $V(G)=\{u_i : 1\le  i\le m\}$ and $V(O_n) = \{v_j : 1\le j\le n\}$. Note that the size of $G$ is $mr/2$. Suppose $f$ is a local antimagic labeling of $G$ that induces a $t$-coloring of $G$. Define $g: E(G\vee O_n) \to [1, mr/2+mn]$ by
\begin{align*}
g(uv) & = f(uv)+mn\mbox{ for each $uv\in E(G)$},\\
g(u_iv_j) & = a_{i,j}\mbox{ for $1\le i\le m, 1\le j\le n$}.
\end{align*}

It is clear that $g$ is a bijection such that
\begin{align*}
g^+(u_i) & = f^+(u_i)+ mnr  + n(mn+1)/2 \mbox{ for }1\le i\le m,\\
g^+(v_j) & = m(mn+1)/2, \mbox{ for }1\le j\le n.
\end{align*}

 Now, $2(g^+(u_i)-g^+(v_j))=2f^+(u_i)+2mnr+(n-m)(mn+1)$. Since $r\ge \frac{(m-n)(mn+1)}{2mn}$, we have $g^+(u_i) > g^+(v_j)$. This means $g$ is a local antimagic labeling that induces a $(t+1)$-coloring of $G\vee O_n$. Hence, $\chi_{la}(G\vee O_n)\le \chi_{la}(G)+1$. Since $\chi_{la}(G\vee O_n)\ge \chi(G\vee O_n) = \chi(G)+1 = \chi_{la}(G)+1$, the theorem holds. 
\end{proof}

From~\cite[Theorem 2.14]{Arumugam} and~\cite{LauNgShiu-chila}, for odd $m\ge 3, n\ge 1$, $\chi_{la}(C_m)=\chi(C_m)=3$ and $\chi_{la}(C_m\vee O_n)=4 =\chi(C_m)+1$.

\begin{problem} Show that the condition $n\ge m$ in Theorem~\ref{thm-G+On} or the condition $r\ge \frac{(m-n)(mn+1)}{2mn}$ in Theorem~\ref{thm-Gr+On} can be omitted. \end{problem}

\section{Affirmative answers to~\cite[Theorem 2.15 and Problem 3.3]{Arumugam}}

In~\cite[Theorem 2.15]{Arumugam}, the authors show that $3\le \chi_{la}(W_n) \le 5$ for $n\equiv 0\pmod{4}$. We now give the exact value of $\chi_{la}(W_n)$.

\begin{theorem}~\label{thm-W4k} For $k\ge 1$, $\chi_{la}(W_{4k}) = 3$. \end{theorem}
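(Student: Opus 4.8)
The plan is to establish the two inequalities $\chi_{la}(W_{4k})\ge 3$ and $\chi_{la}(W_{4k})\le 3$ separately. For the lower bound, the hub together with any two consecutive rim vertices forms a triangle and the even rim $C_{4k}$ is $2$-colourable, so $\chi(W_{4k})=3$; since every local antimagic labeling induces a proper vertex colouring, $\chi_{la}(W_{4k})\ge\chi(W_{4k})=3$. The entire content is therefore the construction of a local antimagic labeling that realises exactly three induced vertex labels.

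First I would fix the skeleton forced by three colours. Writing the rim as $x_1x_2\cdots x_{4k}x_1$ with hub $h$, the hub is adjacent to every $x_i$, so in any $3$-colouring it occupies one colour alone while the even rim is properly $2$-coloured by the alternating classes $\{x_i:i\text{ odd}\}$ and $\{x_i:i\text{ even}\}$. Thus I aim for a bijection $f$ on the $8k$ edges with $f^+(x_i)=A$ for odd $i$, $f^+(x_i)=B$ for even $i$, and $f^+(h)=C$, where $A,B,C$ are pairwise distinct. Writing $s_i=f(hx_i)$ and $e_i=f(x_ix_{i+1})$ (indices mod $4k$, with $e_0=e_{4k}$), the identity $f^+(x_i)=s_i+e_{i-1}+e_i$ turns the requirement into the cyclic linear system $e_{i-1}+e_i=A-s_i$ for odd $i$ and $e_{i-1}+e_i=B-s_i$ for even $i$.

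Next I would exploit that the rim has even length: such a cyclic system on $C_{4k}$ is consistent exactly when the alternating sum of its right-hand sides vanishes, which reduces precisely to $2k(A-B)=\sum_{i\text{ odd}}s_i-\sum_{i\text{ even}}s_i$, a condition I can enforce through how the labels are split between the two spoke-classes. Given consistency, the rim labels are determined up to a single free additive parameter $e_0$: the odd-indexed $e_i$ have the shape (alternating partial sum of the $r_i$) $-\,e_0$, and the even-indexed $e_i$ have the shape (alternating partial sum) $+\,e_0$. The construction then amounts to choosing the spokes $s_1,\ldots,s_{4k}$ (hence the targets $A,B$ and the hub total $C=\sum_i s_i$) together with $e_0$ so that the resulting $e_i$ run bijectively over the labels complementary to the spokes.

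The step I expect to be the main obstacle is exactly this last verification: ensuring the explicit assignment is a genuine bijection of $\{1,\ldots,8k\}$ with the rim labels distinct from one another and from the spokes, while simultaneously keeping $A\ne B$ and $C\notin\{A,B\}$. This is already delicate for the base case $k=1$ (note $W_4=K(1,2,2)$, so it is covered outright by Theorem~\ref{thm-K12r}), and for larger $k$ I would present the labeling by an explicit periodic formula, most plausibly in tabular form and split according to the residue of $k$, and then confirm the three properties by direct computation, checking in particular that the alternating partial sums are distinct within each parity class so that a suitable $e_0$ separates the odd- and even-indexed rim labels. The bookkeeping of these distinctness conditions, rather than any conceptual difficulty, is the crux.
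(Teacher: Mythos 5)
Your lower bound is the same as the paper's ($\chi_{la}(W_{4k})\ge\chi(W_{4k})=3$), and your reformulation of the upper bound as a cyclic linear system $e_{i-1}+e_i=A-s_i$ (odd $i$), $e_{i-1}+e_i=B-s_i$ (even $i$), with the consistency condition $2k(A-B)=\sum_{i\ \mathrm{odd}}s_i-\sum_{i\ \mathrm{even}}s_i$ and one free parameter $e_0$, is a correct and clean way to organize what must be built. Your observation that $W_4=K(1,2,2)$ disposes of $k=1$ via Theorem~\ref{thm-K12r}. But for $k\ge 2$ the proposal stops exactly where the proof has to begin: you never exhibit the spoke labels $s_1,\dots,s_{4k}$, the choice of $e_0$, or the verification that the induced rim labels $e_i$ run bijectively over the $4k$ labels complementary to the spokes while keeping $A$, $B$, $C$ pairwise distinct. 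You explicitly defer this to ``an explicit periodic formula\dots in tabular form'' that is never written down, and you yourself identify this bookkeeping as the crux. Since the theorem's entire content is the existence of such an assignment for every $k$, asserting that one ``would'' produce it is not a proof; existence is not obvious from the framework alone (the alternating partial sums must be distinct within each parity class, the two resulting arithmetic-like families must avoid the spoke set, and the separating $e_0$ must exist simultaneously for all indices).

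For comparison, the paper does exactly this missing work, though packaged differently: it builds two $3\times 2k$ tables whose columns are (two rim-edge labels, one spoke label) with constant column sums $11k+1$ and $9k+2$, and then chains columns into a single closed sequence of ordered pairs $(\ell,r)$ in which the right entry of each pair is the left entry of the next --- this chaining is precisely the constraint that consecutive rim vertices share a rim edge, i.e.\ your cyclic system in disguise. The construction requires separate sequences for $k$ even and $k$ odd, plus ad hoc labelings for $k=1,2$, which gives a sense of how much case-specific design the ``bookkeeping'' actually demands. Your approach would be a legitimate alternative route if completed, but as it stands the essential construction is missing.
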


\begin{proof} Let $V(W_{4k}) = \{v\}\cup \{u_i : 1\le i\le 4k\}$ and $E(W_{4k})=\{vu_i: 1\le i\le 4k\} \cup \{u_iu_{i+1} : 1\le i\le 4k\}$, where $u_{4k+1}=u_1$. For $k=1$ and $2$, we have the labelings $f$ in figures below for $W_4$ and $W_8$ showing that $c(f)=3$.\\[3mm]

\centerline{
\raisebox{6mm}[1cm][1cm]{\epsfig{file=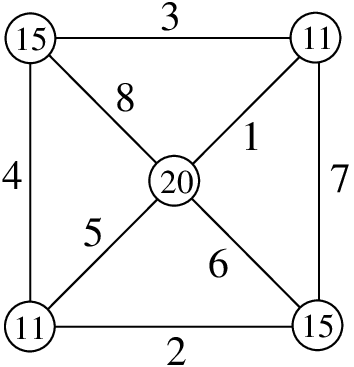, width=2.5cm}}\qquad\epsfig{file=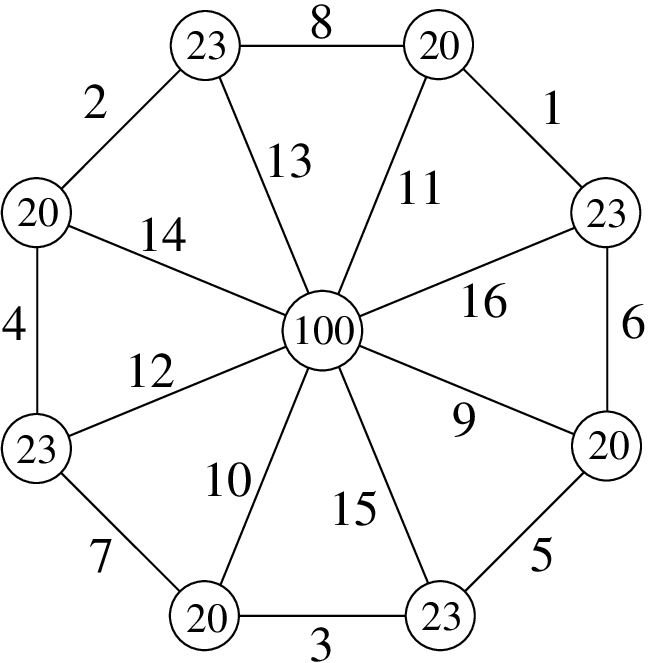, width=3.5cm}
}

For $k\ge 3$, we consider the following two tables.
%\vskip-5mm
\begin{table}[H]
\footnotesize
Table 1.\\
%      \centering
\setlength{\extrarowheight}{1pt}
\setlength{\tabcolsep}{1.5pt}
\vskip-0.3cm
\begin{tabular}{|c|c|c|c|c|c|c"c|c|c|c|c|c|c|}
\hline
$C_1$ & $C_2$ & $C_3$ & $\ldots$ & $C_{k-2}$ & $C_{k-1}$ & $C_{k}$ & $C_{k+1}$ & $C_{k+2}$ & $C_{k+3}$ & $\ldots$  & $C_{2k-2}$ & $C_{2k-1}$ & $C_{2k}$\\
\hline
1 & 3 & 5  & $\ldots$ & $2k - 5$ & $2k - 3$ & $2k - 1$ & 2 & 4 & 6 & $\ldots$ & $2k - 4$ & $2k - 2$ & $2k$\\
\hline
$3k$ & $3k - 1$ & $3k - 2$ & $\ldots$ & $2k + 3$ & $2k + 2$ & $2k + 1$ & $4k$ & $4k - 1$ & $4k - 2$ & $\ldots$ & $3k + 3$ & $3k + 2$ & $3k + 1$\\
\hline
$8k$ & $8k - 1$ & $8k - 2$ & $\ldots$ & $7k + 3$ & $7k + 2$ & $7k + 1$ & $7k - 1$ & $7k - 2$ & $7k - 3$ & $\ldots$ & $6k + 2$ & $6k + 1$ & $6k$\\
\hline
\end{tabular}
%     \centering
\end{table}
\vskip-0.5cm
\begin{table}[H]
\footnotesize
Table 2.\\
%      \centering
\setlength{\extrarowheight}{1pt}
\setlength{\tabcolsep}{1.2pt}
\vskip-0.3cm
\begin{tabular}{|c|c|c|c|c|c|c|c"c|c|c|c|c|c|}
\hline
$C_1$ & $C_2$ & $C_3$ & $\ldots$ & $C_{k-2}$ & $C_{k-1}$ & $C_{k}$ & $C_{k+1}$ & $C_{k+2}$ & $C_{k+3}$ & $C_{k+4}$ & $\ldots$ & $C_{2k-1}$ & $C_{2k}$\\
\hline
$3k + 2$ & $3k + 1$ & $3k$ & $\ldots$ & $2k + 5$ & $2k + 4$ & $2k + 3$ & $2k + 2$ & $2k$ & $4k$ & $4k - 1$ & $\ldots$ & $3k + 4$ & $3k + 3$\\
\hline
1 & 3 & 5 & $\ldots$ & $2k - 5$ & $2k - 3$ & $2k - 1$ & $2k + 1$ & 2 & 4 & 6 & $\ldots$ & $2k - 4$ & $2k - 2$\\
\hline
$6k - 1$ & $6k - 2$ & $6k - 3$ & $\ldots$ & $5k + 2$ & $5k + 1$ & $5k$ & $5k - 1$ & $7k$ & $5k - 2$ & $5k - 3$ & $\ldots$ & $4k + 2$ & $4k + 1$\\
\hline
\end{tabular}
%     \centering
\end{table}

Observe that
\begin{enumerate}[(i)]
  \item all integers in row 1 and row 2 of each table are in $[1, 4k]$;
  \item the two rows 3 of both tables collectively give all integers in $[4k + 1, 8k]$;
  \item Table 1 has constant column sum of $11k + 1$ and Table 2 has constant column sum of $9k + 2$;
  \item in Table 1, all integers from column $C_1$ to $C_k$, and from $C_{k+1}$ to $C_{2k}$ of each row form an arithmetic progression;
  \item in Table 2, all integers from column $C_1$ to $C_{k+1}$, and from $C_{k+2}$ to $C_{2k}$ of each row (or from $C_{k+3}$ to $C_{2k}$ for  row 1 and row 3) form an arithmetic progression.
\end{enumerate}

Consider the following three sequences obtained by taking the first two entries of a particular column of Table 1 and the first two entries of a particular column of Table 2 alternately. Both entries taken are written in ordered pair respectively.\\

For even $k$, we have
\begin{enumerate}[(a)]
  \item $(1, 3k)$, $(3k, 5)$, $(5, 3k - 2)$, $(3k - 2, 9)$, $(9, 3k - 4)$, $(3k - 4, 13)$, $\ldots$, $(2k - 7, 2k + 4)$, $(2k + 4, 2k - 3)$, $(2k - 3, 2k + 2)$, $(2k + 2, 2k + 1)$;
  \item $(2k + 1, 2k - 1)$, $(2k - 1, 2k + 3)$, $(2k + 3, 2k - 5)$, $(2k - 5, 2k + 5)$, $\ldots$, $(3k - 3, 7)$, $(7, 3k - 1)$, $(3k - 1, 3)$, $(3, 3k + 1)$;
  \item $(3k + 1, 2k)$, $(2k, 2)$, $(2, 4k)$, $(4k, 4)$, $(4, 4k - 1)$, $(4k - 1, 6)$, $(6, 4k - 2)$,
$(4k-2, 8)$, $\ldots$, $(2k-6, 3k+4)$, $(3k+4, 2k-4)$, $(2k-4, 3k+3)$, $(3k+3, 2k-2)$,
$(2k - 2, 3k + 2)$, $(3k + 2, 1)$.
\end{enumerate}

Here, sequences (a) and (b) are of length $k$ and sequence (c) is of length $2k$.

Observe that $T = (a) + (b) + (c)$ is a sequence of $4k$ ordered pairs with every integers in $[1, 4k]$ appearing exactly twice, once as the left entry of an ordered pair and once as the right entry of another ordered pair. Therefore, taking the left entry of every ordered pair gives us a sequence $S$ with $4k$ distinct integers in $[1, 4k]$. Define, $f : E(C_{4k}) \to S$ such that $f(e_i) = f(u_iu_{i+1})$ is the $i$-th entry of $S$, $1 \le i \le 4k$ and $u_{4k+1} = u_1$. Let $f(vu_{i+1})$ be the value in row 3 of the column that corresponds to the $i$-th entry of $S$. For  $1 \le j \le 2k$, since all the $(2j-1)$-st ordered pairs of $T$ are from Table 1 and all the $2j$-th ordered pairs are from Table 2, we now have $f^+(u_{2j}) = 11k + 1$ and $f^+(u_{2j-1}) = 9k + 2$. Moreover, $f^+(v) = (4k + 1) + \cdots + (8k) = 2k(12k + 1)$. Thus, $f$ is a local antimagic labeling of $W_{4k}$ with $c(f)=3$.

For odd $k$, we shall have different sequence (a) and sequence (b) as follows.
\begin{enumerate}[(a)]
  \item $(1, 3k)$, $(3k, 5)$, $(5, 3k - 2)$, $(3k - 2, 9)$, $(9, 3k - 4)$, $(3k - 4, 13)$, $\ldots$, $(2k - 5, 2k + 3)$, $(2k + 3, 2k - 1)$;
  \item $(2k - 1, 2k + 1)$, $(2k + 1, 2k + 2)$, $(2k + 2, 2k - 3)$, $(2k - 3, 2k + 4)$, $(2k + 4, 2k - 7)$, $(2k-7, 2k + 6)$, $\ldots$, $(3k - 3, 7)$, $(7, 3k - 1)$, $(3k - 1, 3)$, $(3, 3k + 1)$.
\end{enumerate}
 Here, sequences (a) and (b) are of length $k-1$ and $k+1$, respectively.

By an argument similar to that for even $k$, we also can obtain a local antimagic labeling $f$ of $W_{4k}$ with $c(f)=3$ such that $f^+(u_{2j}) = 11k + 1$ and $f^+(u_{2j-1}) = 9k + 2$ for  $1\le j\le 2k$, and $f^+(v) = 2k(12k + 1)$. Since $\chi_{la}(W_{4k}) \ge \chi(W_{4k}) = 3$, the theorem holds. 
\end{proof}

\begin{example}{\rm
For $k=3$, the tables defined in the proof of Theorem~\ref{thm-W4k} are\\%[-5mm]
\begin{minipage}{0.5\textwidth}
\begin{table}[H]
\footnotesize
Table 1.\\
\begin{tabular}{|c|c|c"c|c|c|}
\hline
$C_1$ & $C_2$ & $C_3$ & $C_4$ & $C_5$ & $C_6$\\
\hline
1 & 3 & 5 & 2  & 4 & 6\\
\hline
9 & 8 & 7 & 12 & 11 & 10\\
\hline
24 & 23 & 22 & 20 & 19 & 18\\
\hline
\end{tabular}
\end{table}
\end{minipage}
\begin{minipage}{0.5\textwidth}
\begin{table}[H]
\footnotesize
Table 2.\\\begin{tabular}{|c|c|c|c"c|c|}
\hline
$C_1$ & $C_2$ & $C_3$ & $C_4$& $C_5$ & $C_6$\\
\hline
11 & 10 & 9 & 8 & 6  & 12\\
\hline
1 & 3 & 5 & 7 & 2 & 4\\
\hline
17 & 16 & 15 & 14 & 21 & 13\\
\hline
\end{tabular}\end{table}
\end{minipage}

\vspace*{3mm}
Thus, sequence $T$ is given by:

\begin{enumerate}[(a)]
\item (1,9), (9,5);
\item (5,7), (7,8), (8,3), (3,10);
\item (10,6), (6,2), (2,12), (12,4), (4,11), (11,1)
\end{enumerate}
\vspace*{0mm}
and $S=\{1,9,5,7,8,3,10,6,2,12,4,11\}$.
Hence we have the following labeling:\\[1mm]
\centerline{
\epsfig{file=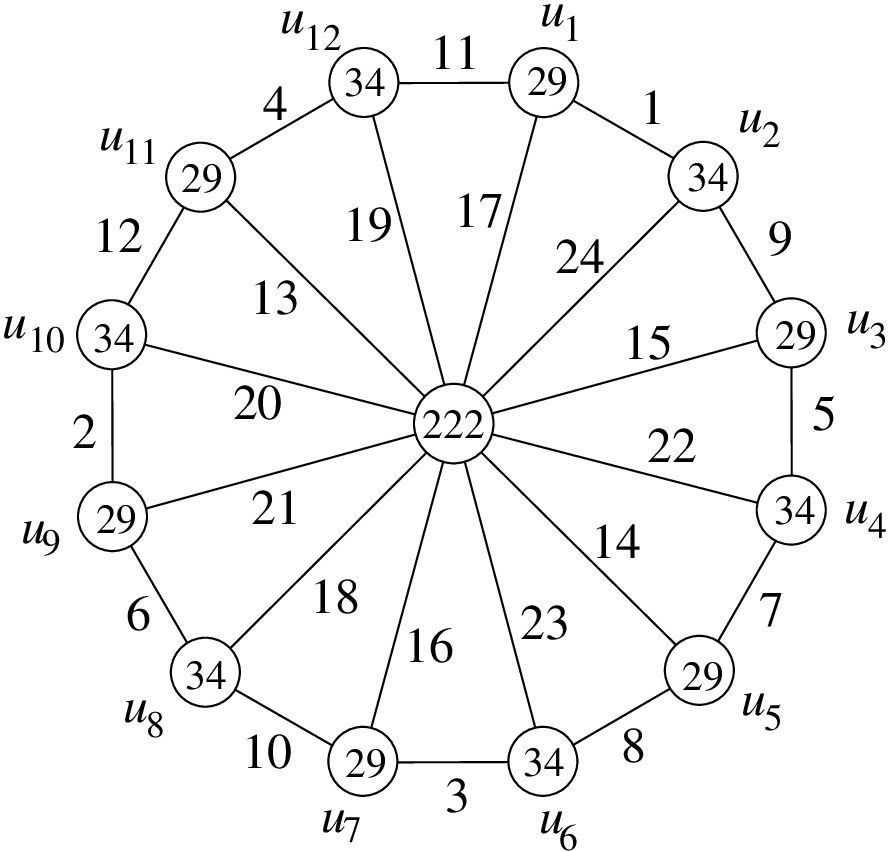, width=5cm}
}
}
\end{example}

In~\cite[Problem 3.3]{Arumugam}, the authors also asked:

\begin{problem}\label{pbm-nk} Does there exist a graph $G$ of order $n$ with $\chi_{la}(G) = n - k$ for every
$k = 0, 1, 2, \ldots, n - 2$? \end{problem}

The following theorem in~\cite{LauShiuNg} is needed to answer Problem~\ref{pbm-nk}. For completeness, the proof is also given.

\begin{theorem}\label{thm-pendant}  Let $G$ be a graph having $k$ pendants. If $G$ is not $K_2$, then $\chi_{la}(G)\ge k+1$ and the bound is sharp.\end{theorem}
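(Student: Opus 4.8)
The plan is to fix an arbitrary local antimagic labeling $f$ of $G$ and show directly that it induces at least $k+1$ distinct vertex labels; since $f$ is arbitrary this yields $\chi_{la}(G)\ge k+1$. Because $\chi_{la}$ is only defined for connected graphs, I take $G$ to be connected throughout. First I would list the pendants $u_1,\ldots,u_k$ and let $e_i$ be the unique edge incident to $u_i$. The crucial first observation is that the $e_i$ are pairwise distinct: if a single edge were incident to two pendants, its two endpoints would both have degree $1$, forcing that edge to be a whole connected component and hence $G=K_2$, contrary to hypothesis. Since $f$ is a bijection onto $\{1,\ldots,|E|\}$, the values $f^+(u_i)=f(e_i)$ are therefore $k$ pairwise distinct integers, so $f$ already uses at least $k$ colors on the pendants alone.

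Next I would produce one more color coming from a non-pendant vertex. Let $M=\max_{1\le i\le k}f(e_i)$ be the largest pendant-edge label, attained at a pendant $u^*$, and let $w$ be its (unique) neighbor. Connectivity together with $G\ne K_2$ forces $\deg(w)\ge 2$, so $w$ is incident to the edge of label $M$ together with at least one further edge carrying a label $\ge 1$; hence $f^+(w)\ge M+1>M\ge f^+(u_i)$ for every $i$. Thus $f^+(w)$ is strictly larger than every pendant color and is therefore a new $(k+1)$-st color. This establishes $c(f)\ge k+1$ for every local antimagic labeling $f$, that is, $\chi_{la}(G)\ge k+1$.

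For sharpness I would exhibit a graph with exactly $k$ pendants meeting the bound, and the star $K(1,k)$ with $k\ge 2$ is the natural candidate: all $k$ leaves are pendants, and any labeling uses the whole edge set, so the leaves receive the $k$ distinct colors $1,\ldots,k$ while the center receives $1+2+\cdots+k=k(k+1)/2$, giving exactly $k+1$ colors. This matches the value $\chi_{la}(K(1,r))=r+1$ recorded earlier, so the bound $\chi_{la}(G)\ge k+1$ is attained.

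I do not expect a serious obstacle here; the argument is short. The only points demanding care are the justification that distinct pendants carry distinct edges (ruling out an edge with two degree-one endpoints, which is precisely where the hypothesis $G\ne K_2$ is used) and the observation that the support vertex $w$ of the maximum-labeled pendant has degree at least $2$, so that its induced label genuinely exceeds $M$ and supplies a genuinely new color.
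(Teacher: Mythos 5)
Your lower-bound argument is correct and is essentially the paper's: both proofs exhibit a non-pendant vertex whose induced label strictly exceeds every pendant's label (the paper uses a non-pendant endpoint of the edge carrying the global maximum label $m$, you use the support vertex of the pendant whose edge carries the maximum \emph{pendant}-edge label; the two are interchangeable), and both rely on $G\ne K_2$ to ensure the pendant edges are distinct so the pendants already receive $k$ distinct colors. The one point you do not cover is sharpness for $k=1$: the star $K(1,k)$ only works for $k\ge 2$ (since $K(1,1)=K_2$ is excluded by hypothesis), and the paper supplies a separate example --- a graph with a single pendant achieving $\chi_{la}=2$ --- to settle that case; as written, your sharpness claim is incomplete for $k=1$.
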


\begin{proof} Suppose $G$ has size $m$. Let $f$ be any local antimagic labeling of $G$.
Consider the edge $uv$ with $f(uv) = m$. We may assume $u$ is not a pendant. Clearly, $f^+(u) > m \ge f^+(z)$ for every pendant $z$. Since all pendants have distinct induced colors, we have $\chi_{la}(G)\ge k+1$.\\

 For $k\ge 2$, since $\chi_{la}(S_k)= k+1$, where $S_k$ is a star with maximum degree $k$, the lower bound is sharp. The labeling in figure below shows that the lower bound is sharp for $k=1$.\\\\
\centerline{\epsfig{file=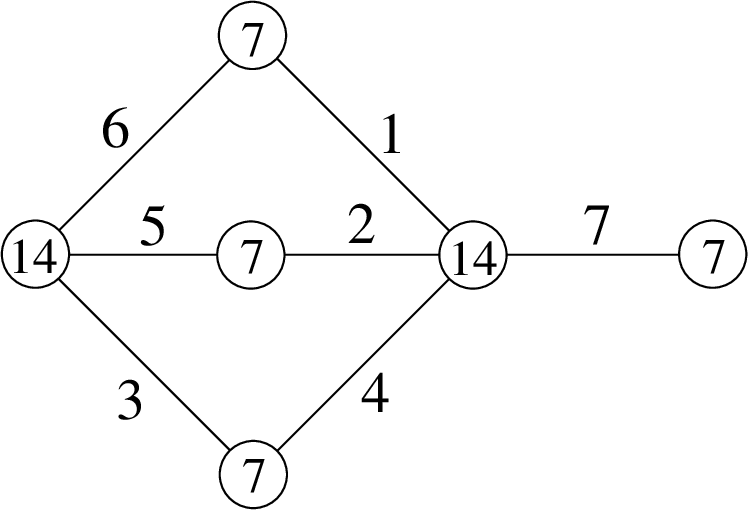, width=3.3cm}}\label{fig:theta61}
 \end{proof}

For $m\ge 2, t\ge 1$, let $CT(m,t)$ be the coconut tree  obtained by identifying the central vertex of a $K(1,t)$ with an end-vertex of a path $P_m$. Note that $CT(2,t)=K(1,t+1)$ with $\chi_{la}(K(1,t+1))=t+2$. Moreover, $CT(m,1) = P_{m+1}$.

\begin{theorem}\label{thm-coconut}  For $m\ge 2, t\ge 1$, $\chi_{la}(CT(m,t))=t+2$.  \end{theorem}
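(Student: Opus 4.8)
The plan is to prove the two bounds $\chi_{la}(CT(m,t))\ge t+2$ and $\chi_{la}(CT(m,t))\le t+2$ separately. Throughout write $V(CT(m,t))=\{u=w_1\}\cup\{w_2,\ldots,w_m\}\cup\{\ell_1,\ldots,\ell_t\}$, where $w_1w_2\cdots w_m$ is the path $P_m$ and $u\ell_1,\ldots,u\ell_t$ are the $t$ star edges at the identified centre $u=w_1$, so the size is $t+m-1$. For the lower bound I would just count pendants: the $t$ leaves $\ell_1,\ldots,\ell_t$ together with the path end $w_m$ give exactly $t+1$ pendant vertices (this holds for every $m\ge2$, since for $m=2$ one recovers the $t+1$ leaves of $K(1,t+1)$). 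Since $CT(m,t)\ne K_2$, Theorem~\ref{thm-pendant} immediately yields $\chi_{la}(CT(m,t))\ge(t+1)+1=t+2$.

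For the upper bound the case $m=2$ is already recorded ($CT(2,t)=K(1,t+1)$ with $\chi_{la}=t+2$), so I would assume $m\ge3$ and exhibit one local antimagic labeling $f$ of color number $t+2$. I would assign the $t$ largest labels to the star edges, $f(u\ell_j)=m+j-1$ for $1\le j\le t$, and distribute $[1,m-1]$ along the path by a zigzag, $f(w_{2k-1}w_{2k})=k$ and $f(w_{2k}w_{2k+1})=m-k$. A short check shows these path values sweep out all of $[1,m-1]$, so $f$ is a bijection onto $[1,t+m-1]$, and the consecutive path sums alternate: $f(w_{i-1}w_i)+f(w_iw_{i+1})=m$ for even $i$ and $=m+1$ for odd $i$, $2\le i\le m-1$.

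The step I expect to carry the real content is the color count. Under this $f$ the leaves receive the $t$ distinct colors $m,m+1,\ldots,m+t-1$; the path end $w_m$ receives the single small color $f(w_{m-1}w_m)<m$; and each internal vertex $w_2,\ldots,w_{m-1}$ receives a color in $\{m,m+1\}$. For $t\ge2$ both $m$ and $m+1$ already occur as leaf colors, so the internal vertices introduce nothing new, while $f^+(u)=\big(\sum_{j=1}^{t}(m+j-1)\big)+1>m+t-1$ supplies one further large color, giving exactly $t+2$ colors. For $t=1$ only $m$ comes from the lone leaf, but then $f^+(u)=m+1$ coincides with the internal color $m+1$, so again just the three colors $m,\ m+1,\ f(w_{m-1}w_m)$ occur, matching $t+2=3$. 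It then remains to confirm $f$ is local antimagic: $u$ is the unique high-degree vertex and its color differs from all its neighbours (which lie in $\{m,m+1\}$ for $t\ge2$, or equal $m$ while $f^+(u)=m+1$ for $t=1$); the internal colors genuinely alternate between $m$ and $m+1$, so adjacent path vertices differ; and $w_{m-1}$, with color in $\{m,m+1\}$, differs from $w_m$, whose color is below $m$. The main obstacle is therefore not a single hard estimate but arranging the zigzag and the star-label block to interlock so that the internal sums fall exactly on the two leaf colors $m,m+1$ (and noticing the $t=1$ coincidence of $f^+(u)$ with an internal color); once that is set up the verification is routine, and together with the lower bound it gives $\chi_{la}(CT(m,t))=t+2$.
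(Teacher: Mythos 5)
Your proposal is correct and follows essentially the same route as the paper: the lower bound comes from counting the $t+1$ pendant vertices and applying Theorem~\ref{thm-pendant}, and the upper bound from a zigzag labeling of the path with $[1,m-1]$ (so that internal path sums alternate between $m$ and $m+1$, absorbed by the leaf colors) together with labels $m,\dots,m+t-1$ on the star edges. The only differences are cosmetic --- you attach the star at the opposite end of the path and treat $t=1$ directly instead of citing the known path result --- and your computed internal sums $m$ and $m+1$ are in fact what the paper's own formula yields.
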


\begin{proof}  By~\cite[Theorem 2.7]{Arumugam}, the theorem holds for $t=1$. Consider $t\ge 2$. Let $P_m = v_1v_2\cdots v_{m}$ and $E(K(1,t)) = \{v_mx_j : 1\le j\le t\}$. Denote by $e_i$ the edge $v_iv_{i+1}$ for $1\le i\le m-1$. Define  $f:E(CT(m,t))\to [1,m+t-1]$ by
\begin{enumerate}[(i)]
  \item $f(e_i) = (i+1)/2$ for odd $i$,
  \item $f(e_i) = m - i/2$ for even $i$,
  \item $f(v_mx_j) = m+j-1$ for $1\le j\le t$.
\end{enumerate}

It is easy to verify that $f$ is a bijection with $f^+(x_j)=m+j-1$ for $1\le j\le t$, $f^+(v_m)\ge 2m+1$, $f^+(v_1)=1$, $f^+(v_i) = m+1$ for even $1 < i < m$ and $f^+(v_i)=m+2$ for odd $1 < i < m$. Thus, $f$ is a local antimagic labeling that induces a $(t+2)$-coloring so that $\chi_{la}(CT(m,t))\le t+2$. By Theorem~\ref{thm-pendant}, we know that $\chi_{la}(CT(m,t))\ge t+2$. Hence, $\chi_{la}(CT(m,t))=t+2$.   \end{proof}

%\begin{theorem} For each $n\ge k+2$, there exists a graph $G$ of order $n$ such that $\chi_{la}(G) = n - k$ if and only if $k=0,1,2,\ldots, n-3$. \end{theorem}

\begin{theorem}\label{thm-n-k} For each possible $n,k$, there exists a graph $G$ of order $n$ such that $\chi_{la}(G) = n - k$ if and only if $n\ge k+3\ge 3$. \end{theorem}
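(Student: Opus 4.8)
The plan is to prove the two directions separately, treating the explicit construction (sufficiency) and the two bounds (necessity) as independent tasks, with the range $n\ge k+3\ge 3$ read as $0\le k\le n-3$, equivalently $3\le n-k\le n$.

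For sufficiency I would realize the target value directly with a coconut tree, since Theorem~\ref{thm-coconut} gives us complete control of $\chi_{la}$. Given $0\le k\le n-3$, set $m=k+2$ and $t=n-k-2$. Then $m\ge 2$ and $t\ge 1$ hold precisely because $k\ge 0$ and $k\le n-3$, so $CT(k+2,\,n-k-2)$ is defined and has order $m+t=n$. By Theorem~\ref{thm-coconut}, $\chi_{la}(CT(k+2,n-k-2))=t+2=n-k$, which is exactly the required value. The two extreme cases serve as consistency checks: $k=0$ yields the star $CT(2,n-2)=K(1,n-1)$ with $\chi_{la}=n$, and $k=n-3$ yields the path $CT(n-1,1)=P_n$ with $\chi_{la}=3$. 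Thus every value in $\{3,4,\ldots,n\}$ is attained by an order-$n$ graph.

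For necessity, assume a connected graph $G$ of order $n$ has $\chi_{la}(G)=n-k$, and deduce $0\le k\le n-3$. The bound $k\ge 0$ is immediate: a labeling induces at most $|V(G)|=n$ distinct colors, so $\chi_{la}(G)\le n$ and hence $n-k\le n$. The substantive inequality is $n-k\ge 3$, i.e. that no connected graph of order $n\ge 3$ has $\chi_{la}(G)\le 2$. Since $G$ has an edge, $\chi_{la}(G)\ge\chi(G)\ge 2$, so the only case to exclude is $\chi_{la}(G)=2$.

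To rule this out I would analyze a hypothetical $f$ with $c(f)=2$: then $G$ is bipartite with parts $A,B$ on which $f^+$ is constant, say $f^+\equiv a$ on $A$ and $f^+\equiv b$ on $B$ with $a\ne b$. Summing $f^+$ over each part, and using that every edge has exactly one endpoint in each part, gives $|A|\,a=|B|\,b=\binom{|E|+1}{2}$, forcing $|A|\ne|B|$, while the endpoints of the edge labelled $|E|$ force $a,b\ge|E|$. I expect this to be the main obstacle: these identities do \emph{not} contradict $a\ne b$ once $n\ge 4$ (for $n=3$ they already do, since they would force $|E|\le 1$ against connectivity), so eliminating $\chi_{la}(G)=2$ cannot be done by a short arithmetic argument and must instead invoke the known structural fact that $\chi_{la}(H)\ge 3$ for every connected $H$ of order at least $3$. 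With $\chi_{la}(G)\ge 3$ in hand we get $n-k\ge 3$, i.e. $n\ge k+3$, which together with $k\ge 0$ completes necessity and hence the theorem.
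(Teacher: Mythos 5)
Your sufficiency argument is exactly the paper's: take $G=CT(k+2,\,n-k-2)$, check $m=k+2\ge 2$ and $t=n-k-2\ge 1$ from $0\le k\le n-3$, and apply Theorem~\ref{thm-coconut} to get $\chi_{la}(G)=t+2=n-k$. That half is fine.

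The necessity half has a genuine gap, and it is located precisely where you flagged your own discomfort. The ``known structural fact'' you propose to invoke --- that $\chi_{la}(H)\ge 3$ for every connected $H$ of order at least $3$ --- is false. Theorem~\ref{thm-Kpq} (from Arumugam et al., restated in this paper) gives $\chi_{la}(K_{p,q})=2$ whenever $p\ge 2$, $p\ne q$ and $p\equiv q\pmod 2$; for instance $K_{2,4}$ has order $6$ and $\chi_{la}=2$. Indeed the theorem immediately following this one shows that a graph of order $n$ with $\chi_{la}=2$ exists for every $n\ge 3$ except $n=3,4,5,7$. So your part-sum analysis was telling you the truth: the identities $|A|a=|B|b=\binom{|E|+1}{2}$ (this is Lemma~\ref{lem-2part}) genuinely fail to produce a contradiction for $n\ge 6$ because there is no contradiction to be had. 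Consequently the ``only if'' direction cannot be established in the generality you state it; read literally for all $n,k$ with $0\le k\le n-2$, it is false (take $n=6$, $k=4$). What the paper actually does is much more modest: since $k\le n-2$ forces the failing case to be $n=k+2$, i.e.\ $\chi_{la}(G)=2$, it only verifies that no graph of order $n=3$ or $4$ has local antimagic chromatic number $2$ (which one can extract from Lemma~\ref{lem-2part}: the parts would have sizes $X>Y$ with $xX=yY$, and the few connected bipartite candidates $P_3$ and $K_{1,3}$ fail), and it explicitly defers the complete answer to Problem~\ref{pbm-nk} to the next theorem, which handles the $\chi_{la}=2$ case order by order. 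To repair your write-up you should either restrict the necessity claim to the orders where it actually holds and do the finite check there, or reinterpret ``each possible $n,k$'' as the paper implicitly does and cite the companion theorem for the remaining case $n-k=2$.
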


\begin{proof} By definition, $k\ge 0$ and $n\ge 3$. Suppose $n\ge k+3\ge 3$. Let $G=CT(m,t)$ of order $n=m+t\ge 3$. For $t\ge 1$, we have that $\chi_{la}(CT(m,t))=t+2=n-(m-2)$. Letting $m-2=k$, we have $\chi_{la}(CT(k+2,n-k-2))=n-k$. Thus, for every possible $0\le k\le n-3$, there is a graph $G$ of order $n$ such that $\chi_{la}(G)=n-k$. This proves the sufficiency.

We prove the necessity by contrapositive. Suffice to assume $n=k+2$. It is easy to check that there is no graph $G$ of order $n=3,4$ such that $\chi_{la}(G) = n-k=2$. 
\end{proof}

 Theorem~\ref{thm-n-k} and the following theorem in~\cite{LauShiuNg} solve Problem~\ref{pbm-nk} completely. For completeness, the proof is stated as well.

\begin{theorem} Suppose $n\ge 3$. There is a graph $G$ of order $n$ with $\chi_{la}(G)=2$ if and only if $n\ne 3, 4, 5, 7$.
\end{theorem}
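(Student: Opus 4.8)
The plan is to prove both directions through the structural constraints that a $2$-coloring forces. First I would record what $\chi_{la}(G)=2$ means: since adjacent vertices receive distinct induced sums and only two values occur, the two color classes $A,B$ are exactly the parts of a bipartition of a connected bipartite $G$, with all of $A$ sharing an induced sum $a$ and all of $B$ sharing a sum $b\ne a$. Writing $p=|A|$, $q=|B|$ and $m=|E|$, each edge contributes to exactly one vertex of $A$, so $pa=\sum_e f(e)=m(m+1)/2=qb$; hence $2p\mid m(m+1)$, $2q\mid m(m+1)$, and $a\ne b$ forces $p\ne q$ (say $p<q$, so $a>b$). A further necessary condition is that the $B$-endpoint of the edge labeled $m$ has induced sum at least $m$, so $b\ge m$, i.e. $q\le (m+1)/2$.

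For sufficiency I would give explicit bipartite labelings. For even $n\ge 6$ take $G=K_{2,n-2}$: a $2\times(n-2)$ magic rectangle (which exists since $n-2\ge 4$ is even) furnishes a bijective labeling whose two row sums equal $a$ and whose $n-2$ column sums equal $b=2(n-2)+1\ne a$, giving $\chi_{la}=2$. For odd $n\ge 9$ I would fix the small part $A$ of size $p=3$ and put $q=n-3$ (even) vertices in $B$, almost all of degree two. Choosing $m=2q-1$ when $q\not\equiv 1\pmod 3$ and $m=2q$ when $q\equiv 1\pmod 3$ makes $a=m(m+1)/6$ and $b=m(m+1)/(2q)$ integers with $b\ge m$; then the degree-two vertices of $B$ realize the complementary label pairs summing to $b$ (with one pendant carrying the largest label in the first case), and it remains to distribute the labels among the three vertices of $A$ so that each attains the sum $a$, keeping the two labels of every pair on distinct vertices of $A$. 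I would present this final distribution as an explicit equal-sum $3$-partition with no pair monochromatic (in the tabular style used earlier in the paper), which exists for all $q\ge 6$.

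For necessity I would rule out $n\in\{3,4,5,7\}$ from the constraints above. The balanced case $p=q$ is impossible for odd $n$, and for the remaining unbalanced $(p,q)$ almost every candidate dies immediately: either $a$ or $b$ fails to be an integer, or $m$ is too small to cover $B$ without forcing equal pendant labels, or $b\ge m$ is violated. After this arithmetic sieve the only configuration that survives is $n=7$ with parts $(3,4)$ and $m=8$, forcing $a=12$, $b=9$; here $B$ must be $2$-regular, so its four vertices realize the unique complementary pairs $\{1,8\},\{2,7\},\{3,6\},\{4,5\}$, and a short case check shows these eight labels cannot be split among three vertices of $A$ with all three sums equal to $12$ unless some pair lands on a single vertex (a forbidden multi-edge).

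The main obstacle is precisely this $n=7$ case: unlike $n=3,4,5$, it is eliminated by no divisibility or size inequality, so the exclusion must come from the combinatorial impossibility of the equal-sum, pair-separating $3$-partition of $\{1,\dots,8\}$. It is worth noting that the same $(3,4,8)$ pattern is exactly the $q=4$ instance of my odd-order construction, so that construction breaks down naturally only at $n=7$ and succeeds for every larger odd order; the residual routine work is checking that the equal-sum distribution onto the three vertices of $A$ can indeed be carried out for all $q\ge 6$.
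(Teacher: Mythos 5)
Your overall framework matches the paper's: Lemma~\ref{lem-2part}-style structural constraints ($pa=qb=m(m+1)/2$, $p\ne q$, bipartite) plus the observation that the larger part's common sum must be at least $m$; unbalanced complete bipartite graphs via magic rectangles for even $n\ge 6$; and for odd $n\ge 9$ a bipartite graph with parts of size $3$ and $q=n-3$ whose large part consists of degree-$2$ vertices carrying complementary label pairs (with one pendant when $m=2q-1$). Your parameter choices even coincide with the paper's in the cases $n\equiv 1,5\pmod 6$ (the paper uses $m=2q$ rather than $2q-1$ when $q\equiv 0\pmod 3$, but both are arithmetically admissible). Your necessity argument is in fact \emph{more} explicit than the paper's, which dismisses $n=3,4,5,7$ as ``routine to check''; your divisibility sieve and the case analysis showing that $\{1,\dots,8\}$ admits no partition into three parts of sum $12$ avoiding the complementary pairs $\{k,9-k\}$ is correct and is the right way to kill the one surviving configuration at $n=7$.

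The genuine gap is in the sufficiency for odd $n\ge 9$: you assert that the equal-sum, pair-separating distribution of the labels onto the three vertices of $A$ ``exists for all $q\ge 6$'' and defer it as ``residual routine work.'' That existence claim is the entire technical content of the theorem --- it is exactly what the paper's Cases (a), (b), (c) establish, via explicit $3\times q$ labeling matrices (with separate subconstructions for $n\equiv 1,3,5\pmod 6$ and, in case (b), for $s$ odd versus $s$ even), together with row-sum and column-sum verifications. It is not automatic: as your own $n=7$ analysis shows, the same combinatorial demand fails at $q=4$, so some nontrivial construction or induction is required to certify $q\ge 6$, and nothing in your write-up supplies it. Until you exhibit these $3$-partitions explicitly (or give a general existence argument), the forward direction for odd $n$ is unproved.
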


\begin{proof} Suppose $n=3,4,5,7$, it is routine to check that all graphs $G$ of order $n$ has $\chi_{la}(G)\ge 3$. This proves the necessity by contrapositive.

We now prove the sufficiency. Suppose $n$ is odd and $n\ge 9$. Since $n=6s+1$ ($s\ge 2$), $n=6s+3$ ($s\ge 1$) or $6s+5$ ($s\ge 1$), we consider the following three cases.

 {\bf Case (a).} $n=6s+1$. Suppose $s\ge 3$. We shall construct a bipartite graph $G$ with bipartition $(A,B)$, where $|A|=3$ and $|B|=6s-2$, such that all vertices in $B$ are of degree $2$. If $G$ exists, then $G$ is of order $6s+1$ and size $12s-4$. Suppose there is a local antimagic labeling $f$ of $G$ such that $c(f)=2$, then $f$ corresponds to a labeling matrix $M$
%
%\[\begin{pmatrix}\bigstar & A\\
%A^T &\bigstar \end{pmatrix},\] where $A$
%
of size $3\times (6s-2)$ such that each of its entry is either an integer in $[1, 12s-4]$ or $*$. Moreover, each integer in $[1, 12s-4]$ appears as entry of $A$ once. Note that the total sum of integers in $[1, 12s-4]$ is $3(6s-2)(4s-1)$. We now arrange integers in $[1, 12s-4]$ to form matrix $M$ as follows: % such that each row sum is $(6s-2)(4s-1)$ and each column sum is $3(4s-1)$.

\begin{enumerate}[(1).]
  \item In row 1, assign $k$ to column $k$ if $k=2,4,6,\ldots,4s-2, 6s-2$; assign $12s-3-k$ to column $k$ if $k=3,5,7,\ldots, 4s-1$.
  \item In row 2, assign $k$ to column $k$ if $k=1,2s-1,2s+1,2s+3,\ldots,6s-3$; assign $12s-3-k$ to column $k$ if $k=2s, 2s+2, 2s+4, \ldots, 6s-4$.
  \item In row 3, assign $12s-4$ to column 1; assign $k$ to column $k$ if $k=3, 5, 7, \ldots, 2s-3, 4s, 4s+2, 4s+4, \ldots, 6s-4$; assign $12s-3-k$ to column $k$ if $k=2,4,6,\ldots, 2s-2, 4s+1, 4s+3, 4s+5, \ldots, 6s-3, 6s-2$.
  \item All the remaining columns of each row is assigned with $*$.
\end{enumerate}

 The resulting matrix is given by the following table:\\
$\begin{array}{|c"c|c|c|c"c|c|c|c|}\hline
* & 2 & 12s-6 & \cdots & 2s-2 & 10s-2 & 2s & 10s-4 & \cdots \\\hline
1 & * & * & \cdots & * & 2s-1 & 10s-3 &  2s+1  & \cdots \\\hline
12s-4 & 12s-5 & 3 & \cdots & 10s-1 &  * & * & * & *  \\\hline
\end{array}$\\[3mm]
$\begin{array}{|c|c|c"c|c|c|c|c"c|}\hline
 8s & 4s-2 & 8s-2 & * & \dots & * & * & * & 6s-2\\\hline
 4s-3 & 8s-1 & 4s-1 & 8s-3 & \cdots & 6s-5 & 6s+1 & 6s-3 & *\\\hline
 * & * & * & 4s & \cdots & 6s+2 & 6s-4 & 6s & 6s-1\\\hline
\end{array}$\\

 It is easy to check that the first row contains $4s-1$ numbers, the second row contains $4s$ numbers and the third row contains $4s-3$ numbers. Moreover, each column sum is $12s-3$ and each row sum is $(6s-2)(4s-1)$. Thus, $G$ exists and $\chi_{la}(G)=2$.

When $s=2$ $(n=13)$, a required labeling matrix is as follow:
\begin{center} $M=\begin{array}{|c|c|c|c|c|c|c|c|c|c|}\hline
* & 2 & 18 & 4 & 16 & 6 & 14 & * & * & 10\\\hline
1 & * & 3 & 17 & 5 & 15 & 7 & 13 & 9 & *\\\hline
20 & 19 & * & * & * & * & * & 8 & 12 & 11 \\\hline
\end{array}$\end{center}

{\bf Case (b).} $n=6s+3$. Similar to Case (a), a labeling matrix $M$ of size $3\times 6s$ such that each of its entry is either an integer in $[1, 12s]$ or $*$ can be obtained.

For $s\ge 1$, we let the matrix $M=\begin{pmatrix}M_1 &M_2 &M_3\end{pmatrix}$, where each $M_i$ is a $3\times 2s$ matrix.
For the first matrix, we assign $1,2,\dots, 2s$ at the first row; $*$ at each entry of the second row; $12s, 12s-1, \dots, 10s+1$ at the third row. We then swap the $(1, j)$-entry with $(3, j)$-entry of this matrix when $j\equiv 2,3\pmod{4}$ and $2\le j\le 2s$. The resulting matrix is $M_1$. Similarly, for the second matrix, we assign $10s, 10s-1, \dots, 8s+1$ at the first row; $2s+1,2s+2,\dots, 4s$ at the second row; $*$ at each entry of the third row. We then swap the $(1, j)$-entry with $(2, j)$-entry of this matrix when $j\equiv 2,3\pmod{4}$ and $2\le j\le 2s$. The resulting matrix is $M_2$. For the third matrix, we assign $*$ at each entry of the first row;  $8s, 8s-1, \dots, 6s+1$ at the second row; $4s+1,4s+2,\dots, 6s$ at the third row. We then swap the $(2, j)$-entry with $(3, j)$-entry of this matrix when $j\equiv 2,3\pmod{4}$ and $2\le j\le 2s$. The resulting matrix is $M_3$.

So when $s$ is odd, we have\\[1mm]
\setlength{\tabcolsep}{0.6pt}
$M_1=\small\begin{array}{|c|c"c|c|c|c"c"c|c|c|c|}\hline
1 & 12s-1 & 12s-2 & 4  & 5 & 12s-5 & \cdots &  10s+4 & 2s-2 & 2s-1 & 10s+1 \\\hline
* & * & * & * & * & * & \cdots &  * & * & * & *  \\\hline
12s & 2 & 3 & 12s-3 & 12s-4 & 6& \cdots &  2s-3 & 10s+3 & 10s+2 & 2s \\\hline
\end{array}$\\[3mm]
$\setlength{\tabcolsep}{0.8pt}M_2=\small\begin{array}{|c|c"c|c|c|c"c"c|c|c|c|}\hline
10s & 2s+2 & 2s+3 & 10s-3 & 10s-4 & 2s+6 & \cdots &  4s-3 & 8s+3 & 8s+2 & 4s \\\hline
2s+1 & 10s-1 & 10s-2 & 2s+4 & 2s+5 & 10s-5 & \cdots &  8s+4 & 4s-2 & 4s-1 & 8s+1 \\\hline
* & * & * & * &  * & * & \cdots & * & * & * & *  \\\hline
\end{array}$\\[3mm]
$\setlength{\tabcolsep}{0.8pt}M_3=\small\begin{array}{|c|c"c|c|c|c"c"c|c|c|c|}\hline
* & * & * & * &  * & * & \cdots & * & * & * & *  \\\hline
8s & 4s+2 & 4s+3 & 8s-3 & 8s-4 & 4s+6 &\cdots &  6s-3 & 6s+3 & 6s+2 & 6s \\\hline
4s+1 & 8s-1 & 8s-2 & 4s+4 & 4s+5 & 8s-5 & \cdots &  6s+4 & 6s-2 & 6s-1 & 6s+1 \\\hline
\end{array}$\\[1mm]

One may check that the first row sum of $M_1$, the second row sum of $M_2$ and the third row sum of $M_3$ are the same which is $12s+\frac{1}{4}(2s-2)(24s+2)=12s^2+s-1$. It is easy to see that the third row sum of $M_1$, the first row sum of $M_2$ and the second row sum of $M_3$ also are the same and equals $(12s+1)(2s)-(12s^2+s-1)=12s^2+s+1$. Hence each row sum of $M$ is $24s^2+2s$ and each column sum is $12s+1$.

%Suppose $s$ is even. $M_1$, $M_2$ and $M_3$ are obtained by the same initial arrangement and swapping as odd $s$ case.
%For the first matrix, we assign $1,2,\dots, 2s$ at the first row; $*$ at each entry of the second row; $12s, 12s-1, \dots, 10s+1$ at the third row. We then swap the $(1, j)$-entry with $(3, j)$-entry of this matrix when $j\equiv 2,3\pmod{4}$ and $2\le j\le 2s$. The resulting matrix is $M_1$. Similarly, for the second matrix, we assign $2s+1,2s+2,\dots, 4s$ at the first row; $10s, 10s-1, \dots, 8s+1$ at the second row; $*$ at each entry of the third row. We then swap the $(1, j)$-entry with $(2, j)$-entry of this matrix when $j\equiv 2,3\pmod{4}$ and $2\le j\le 2s$. The resulting matrix is $M_2$. For the third matrix, we assign $*$ at each entry of the first row; $4s+1,4s+2,\dots, 6s$ at the second row;  $8s, 8s-1, \dots, 6s+1$ at the third row. We then swap the $(2, j)$-entry with $(3, j)$-entry of this matrix when $j\equiv 2,3\pmod{4}$ and $2\le j\le 2s$. The resulting matrix is $M_3$.
When $s$ is even, we have\\[1mm]
$\setlength{\tabcolsep}{0.8pt}M_1=\small\begin{array}{|c|c|c|c"c|c|c"c|c|c|c|}\hline
1 & 12s-1 & 12s-2 & 4 & 5 & \cdots & 2s-4 &2s-3 & 10s+3 & 10s+2 & 2s \\\hline
* & * & * & * & * & \cdots & * & * & * & * & *\\\hline
12s & 2 & 3 & 12s-3 & 12s-4 &\cdots & 10s+5& 10s+4 & 2s-2 & 2s-1 & 10s+1 \\\hline
\end{array}$\\[3mm]
$\setlength{\tabcolsep}{0.8pt}M_2=\small\begin{array}{|c|c|c|c"c|c|c"c|c|c|c|}\hline
10s & 2s+2 & 2s+3 & 10s-3 & 10s-4 & \cdots &  8s+5& 8s+4 & 4s-2 & 4s-1 & 8s+1 \\\hline
2s+1 & 10s-1 & 10s-2 & 2s+4 & 2s+5 & \cdots & 4s-4 & 4s-3 & 8s+3 & 8s+2 & 4s \\\hline
* & * & * & * & * & \cdots & * & * &  * & * & * \\\hline
\end{array}$\\[3mm]
$M_3=\small\begin{array}{|c|c|c|c"c|c|c"c|c|c|c|}\hline
* & * & * & * & * & \cdots  & * & * & * & * & * \\\hline
8s & 4s+2 & 4s+3 & 8s-3 & 8s-4 & \cdots & 6s+5 & 6s+4 & 6s-2 & 6s-1 & 6s+1 \\\hline
4s+1 & 8s-1 & 8s-2 & 4s+4 & 4s+5 & \cdots & 6s-4& 6s-3 & 6s+3 & 6s+2 & 6s \\\hline
\end{array}$\\[1mm]

One may check that all the numerical row sum of $M_i$ are the same which is $\frac{1}{4}(2s)(24s+2)+10s+2=12s^2+s$. Hence each row sum of $M$ is $24s^2+2s$ and each column sum is $12s+1$.

Thus, $G$ exists and $\chi_{la}(G)=2$.

{\bf Case (c).} $n=6s+5$. Suppose $s\ge 2$. We shall construct a bipartite graph $G$ with bipartition $(A,B)$, where $|A|=3$ and $|B|=6s+2$, such that $B$ has a vertex of degree 1 and the remaining $6s+1$ vertices are of degree $2$. If $G$ exists, then $G$ is of order $6s+5$ and size $12s+3$. Note that the total sum of integers in $[1, 12s+3]$ is $3(6s+2)(4s+1)$. Similar to the above construction, we want to arrange integers in $[1, 12s+3]$ to form a $3\times (6s+2)$ matrix $M$ as follows:

\begin{enumerate}[(1).]
  \item In row 1, assign $k$ to column $k$ if $k=2,4,6,\ldots,4s+2, 6s$; assign $12s+3-k$ to column $k$ if $k=3,5,7,\ldots, 4s+1$.
  \item In row 2, assign $k$ to column $k$ if $k=1,2s+1,2s+3,2s+5,\ldots,6s+1$; assign $12s+3-k$ to column $k$ if $k=2s, 2s+2, 2s+4, \ldots, 6s-2$.
  \item In row 3, assign $12s+2$ to column 1 and $12s+3$ to column $6s+2$; assign $k$ to column $k$ if $k=3, 5, 7, \ldots, 2s-1, 4s, 4s+2, 4s+4, \ldots, 6s-2$; assign $12s+3-k$ to column $k$ if $k=2,4,6,\ldots, 2s-2, 4s+3, 4s+5, 4s+7, \ldots, 6s+1$.
  \item All the remaining columns of each row is assigned with $*$.
\end{enumerate}
The resulting matrix is given by the following table:\\
$\begin{array}{|c|c"c|c|c|c"c|c|c|c|}\hline
* & 2 & 12s & 4 & \cdots & 10s+4 & 2s & 10s+2 & \cdots & 4s \\\hline
1 & * & * & * & \cdots & * & 10s+3 & 2s+1 & \cdots &  8s+3 \\\hline
12s+2 & 12s+1 & 3 & 12s-1 & \cdots & 2s-1 & * & * &\cdots & *\\\hline
\end{array}$\\[3mm]
$\begin{array}{|c|c"c|c|c|c"c|c"c|}\hline
8s+2 & 4s+2 & * & \cdots & * & * & 6s & * & *\\\hline
4s+1 & 8s+1 & 4s+3 & \cdots &  6s+5 & 6s-1 & *  & 6s+1 & *\\\hline
* & * & 8s & \cdots & 6s-2 & 6s+4 & 6s+3  & 6s+2 & 12s+3\\\hline
\end{array}$

 It is easy to check that the first row contains $4s+1$ numbers, the second row contains $4s+2$ numbers and the third row contains $4s$ numbers. Moreover, each column sum is $12s+3$ and each row sum is $(6s+2)(4s+1)$.

When $s=1$ $(n=11)$, a required labeling matrix is as follow:
\begin{center} $M=\begin{array}{|*{8}{c|}}\hline
1 & 3 & 4 & 5 & 6 & 8 & 13 & *\\\hline
* & 12 & * & 10 & 9 & 7 & 2 & * \\\hline
14 & * & 11 & * & * & * & * & 15\\\hline
\end{array}$
\end{center}

 Thus, $G$ exists and $\chi_{la}(G)=2$. \\

Suppose $n\ge 6$ is even. In~\cite[Theorem 2.11]{Arumugam} (see Theorem~\ref{thm-Kpq}), we have $\chi_{la}(K_{p,q})=2$, where $p\ne q$ and $n=p+q\ge 6$ is an even integer. Therefore, there exists a graph $G$ of order $n$ such that $\chi_{la}(G)=2$ for every even $n\ge 6$.
 \end{proof}

\begin{example} Let $n=19$, we have
\begin{center} $M=\begin{array}{|c"c|c|c"c|c|c|c|c|c|c"c|c|c|c"c|}\hline
* & 2 & 30 & 4 & 28 & 6 & 26 & 8 & 24 & 10 & 22 & * & * & * & * & 16\\\hline
1 & * & * & * & 5 & 27 & 7 & 25 & 9 & 23 & 11 & 21 & 13 & 19 & 15 & *\\\hline
32 & 31 & 3 & 29 & * & * & * & * & * & * & * & 12 & 20 & 14 & 18 & 17 \\\hline
\end{array}$\end{center}

Let $n=21$, we have
\begin{center} $M=\begin{array}{|c|c"c|c|c|c||c|c"c|c|c|c||c|c"c|c|c|c|}\hline
1 & 35 & 34 & 4 & 5 & 31 & 30 & 8 & 9 & 27 & 26 & 12 & * & * & * & * & * & *\\\hline
* & * & * & * & * & * & 7 & 29 & 28 & 10 & 11 & 25 & 24 & 14 & 15 & 21 & 20 & 18\\\hline
36 & 2 & 3 & 33 & 32 & 6 & * & * & * & * & * & * & 13 & 23 & 22 & 16 & 17 & 19\\\hline
\end{array}$\end{center}

Let $n=15$, we have
\begin{center} $M=\begin{array}{|c|c|c|c||c|c|c|c||c|c|c|c|}\hline
1 & 23 & 22 & 4 & 20 & 6 & 7 & 17 & * & * & * & *\\\hline
* & * & * & * & 5 & 19 & 18 & 8 & 16 & 10 & 11 & 13\\\hline
24 & 2 & 3 & 21 & * & * & * & * & 9 & 15 & 14 & 12\\\hline
\end{array}$\end{center}

Let $n=17$, we have
\begin{center} $M=\begin{array}{|c|c"c|*{6}{c|}c"c"c|c"c|}\hline
* & 2 & 24 & 4 & 22 & 6 & 20 & 8 & 18 & 10 & * & 12 & * & *\\\hline
1 & * & * & 23 & 5 & 21 & 7 & 19 & 9 & 17 & 11 & * & 13 & * \\\hline
26 & 25 & 3 & * & * & * & * & * & * & *  & 16 & 15 & 14 & 27 \\\hline
\end{array}$
\end{center}
\end{example}

\section{Complete Bipartite Graphs}

\begin{theorem}\label{thm-Kpq}\cite[Theorems 2.11-12]{Arumugam} For $p,q\ge 1$ and $(q,p)\ne (1,1)$, $$\chi_{la}(K_{p,q})=\begin{cases} q+1 &\mbox{ if } q > p = 1,\\
3 &\mbox{ if } p=2, q=2\mbox{ or } q \mbox{ is odd},\\
 2 &\mbox{ if } p\ge 2, p\not= q \mbox{ and } p\equiv q\pmod{2}.\end{cases}$$ \end{theorem}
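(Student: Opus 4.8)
The plan is to separate the three regimes by first pinning down exactly when the absolute lower bound $\chi_{la}(K_{p,q})\ge\chi(K_{p,q})=2$ can be met. Write $A=\{a_1,\dots,a_p\}$ and $B=\{b_1,\dots,b_q\}$ for the two parts, and record the labels of a bijection $f$ in the $p\times q$ matrix $M$ with $M_{ij}=f(a_ib_j)$, so that $f^+(a_i)$ is the $i$-th row sum and $f^+(b_j)$ is the $j$-th column sum. Because every $a_i$ is adjacent to every $b_j$, a proper coloring using only two colors is forced to be monochromatic on each side: all of $A$ must receive one color $\alpha$ and all of $B$ the other color $\beta\ne\alpha$. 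Summing all labels two ways gives $p\alpha=q\beta=\tfrac{pq(pq+1)}{2}$, whence $\alpha=\tfrac{q(pq+1)}{2}$ and $\beta=\tfrac{p(pq+1)}{2}$. First I would observe that these are integers precisely when $p\equiv q\pmod 2$, that $\alpha\ne\beta$ precisely when $p\ne q$, and that realizing constant row and column sums requires $p,q\ge 2$ (for $p=1$ each column of $M$ is a single distinct entry and cannot be constant). This computation supplies the lower bounds for the value-$2$ and value-$3$ regimes, while the star requires the separate structural argument below.

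For the value-$2$ regime (case $p\ge 2$, $p\ne q$, $p\equiv q\pmod 2$, where we may also assume $q\ge 2$, the size-one case being a star) the matrix $M$ I am looking for---distinct entries in $[1,pq]$ with constant row sums and constant column sums---is exactly a $p\times q$ magic rectangle. Since $p,q\ge 2$, $p\equiv q\pmod 2$, and $p\ne q$ forces $(p,q)\ne(2,2)$, such a rectangle exists by the result of \cite{Chai, Reyes} recalled in Section~2. Its row and column constants are automatically the required $\alpha\ne\beta$, so the associated $f$ is local antimagic with $c(f)=2$, matching the lower bound.

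For the value-$3$ regime I would treat the two subcases of $p=2$. When $q=2$ we have $K_{2,2}=C_4$, whose local antimagic chromatic number $3$ is already recorded in the text. When $q$ is odd, the parity computation above rules out $c(f)=2$, so $\chi_{la}(K_{2,q})\ge 3$; for the matching upper bound I would set $f(a_1b_j)=j$ and $f(a_2b_j)=2q+1-j$ for $1\le j\le q$. Then every column sum is $f^+(b_j)=2q+1$, while $f^+(a_1)=\tfrac{q(q+1)}{2}$ and $f^+(a_2)=\tfrac{q(3q+1)}{2}$; a short check that the relevant quadratics have no integer roots shows these three values are pairwise distinct, so $c(f)=3$. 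Finally, the star case $q>p=1$ is governed not by parity but by pendant structure: $K_{1,q}$ has $q$ pendant vertices and is not $K_2$, so Theorem~\ref{thm-pendant} gives $\chi_{la}(K_{1,q})\ge q+1$, and any bijection already realizes $q+1$ distinct colors because the $q$ leaf sums $1,\dots,q$ and the center sum $\tfrac{q(q+1)}{2}$ are all distinct.

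The step I expect to carry the real weight is the structural lower-bound lemma of the first paragraph: the claim that a two-color local antimagic coloring of a complete bipartite graph must be monochromatic on each side, together with the integrality and distinctness bookkeeping, is what forces the trichotomy and explains why the star behaves differently from the dense cases. Once that is in place, the achievability half reduces to quoting the magic-rectangle existence theorem and to the two explicit, easily verified labelings above, so I anticipate no further obstruction.
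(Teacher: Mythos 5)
Your argument is correct, but note that the paper does not prove this statement at all: Theorem~\ref{thm-Kpq} is quoted verbatim from \cite[Theorems 2.11--12]{Arumugam} and used as a black box, so there is no in-paper proof to compare against. What you have written is a self-contained derivation assembled from exactly the ingredients the paper deploys for its \emph{new} complete-bipartite results in Section~4: your first-paragraph computation (a $2$-coloring of $K_{p,q}$ is monochromatic on each side, forcing $\alpha=\tfrac{q(pq+1)}{2}$ and $\beta=\tfrac{p(pq+1)}{2}$, hence integrality iff $p\equiv q\pmod 2$ and $\alpha\neq\beta$ iff $p\neq q$) is precisely Lemma~\ref{lem-2part} specialized to $K_{p,q}$, and your value-$2$ construction is the same magic-rectangle existence appeal the paper makes in Theorem~\ref{thm-Kpp}. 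Your explicit two-row labeling for $K_{2,q}$ with $q$ odd checks out (column sums $2q+1$, row sums $q(q+1)/2$ and $q(3q+1)/2$, and the two quadratics $q^2-3q-2$ and $3q^2-3q-2$ indeed have no integer roots), as does the pendant argument for the star via Theorem~\ref{thm-pendant}. The one point worth flagging is that the theorem as printed tacitly assumes $q\ge p$ (compare the final Corollary of Section~4): read literally, the third case with $q=1$ and $p\ge 3$ odd would contradict the star value, which is exactly the edge case you correctly set aside. I see no gap.
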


We next determine $\chi_{la}(K_{p,q})$ for all $p,q$ not considered in Theorem~\ref{thm-Kpq}. Suppose $f$ is a local antimagic labeling of $K_{p,q}$ and $M$ is a $p\times q$ matrix with row sums and column sums correspond to the vertex labels under $f$ accordingly.

The following lemma~\cite{LauNgShiu-chila} is needed.
\begin{lemma}\label{lem-2part} Let $G$ be a graph of size $q$. Suppose there is a local antimagic labeling of $G$ inducing a $2$-coloring of $G$ with colors $x$ and $y$, where $x<y$. Let $X$ and $Y$ be the numbers of vertices of colors $x$ and $y$, respectively. Then $G$ is a bipartite graph whose sizes of parts are $X$ and $Y$ with $X>Y$, and $xX=yY= \frac{q(q+1)}{2}$. \end{lemma}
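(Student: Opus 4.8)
The plan is to extract the entire statement from two elementary observations: that a $2$-coloring forces a bipartition, and that summing the induced labels over a single color class double-counts the edge labels exactly once.

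First I would argue that the induced coloring is proper. By the defining property of a local antimagic labeling, any two adjacent vertices receive distinct induced labels; since only the two values $x$ and $y$ occur, every edge must join a vertex of color $x$ to a vertex of color $y$. Hence the two color classes partition $V(G)$ into independent sets, so $G$ is bipartite with parts of sizes $X$ and $Y$, and moreover each edge is incident to exactly one vertex of each color.

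Next I would compute $\sum_{e} f(e) = 1 + 2 + \cdots + q = q(q+1)/2$, using that $f$ is a bijection onto $[1,q]$. Summing the induced labels over the $X$ vertices of color $x$ gives $\sum_{f^+(v)=x} f^+(v) = xX$ on the one hand; on the other hand, expanding each $f^+(v)$ and using that every edge contributes its label to precisely one vertex of color $x$ (by the bipartite structure just established), this same sum equals $\sum_{e} f(e) = q(q+1)/2$. Thus $xX = q(q+1)/2$, and the identical argument applied to the $y$-class yields $yY = q(q+1)/2$.

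Finally, from $xX = yY = q(q+1)/2$ together with $x < y$, dividing gives $X/Y = y/x > 1$, hence $X > Y$. The argument is essentially a single counting identity, so I do not expect a genuine obstacle; the only point requiring care is the justification that each edge meets exactly one vertex of each color, which is precisely what the properness from the first step provides, and the incidental check that no isolated vertex can occur (such a vertex would carry induced label $0$, forcing a third color and contradicting $xX = q(q+1)/2 > 0$).
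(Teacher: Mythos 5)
Your proof is correct. Note that the paper itself states this lemma as a quoted result from the reference \cite{LauNgShiu-chila} and gives no proof here, so there is nothing in this paper to compare against; your argument --- properness of the induced coloring forces a bipartition, and summing $f^+$ over one color class counts each edge label exactly once, giving $xX=yY=q(q+1)/2$ and hence $X>Y$ from $x<y$ --- is the standard double-counting proof and is sound (your side remarks that $x>0$ and that no isolated vertices can occur are exactly the right points to check).
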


\begin{theorem}\label{thm-Kpp} For $p\ge 3$, $\chi_{la}(K_{p,p}) = 3$.
\end{theorem}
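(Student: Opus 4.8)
The plan is to establish the lower bound and upper bound separately. For the lower bound, I would first rule out $\chi_{la}(K_{p,p}) = 2$ using Lemma~\ref{lem-2part}: a $2$-coloring would force $K_{p,p}$ to be a bipartite graph whose two color classes have \emph{unequal} sizes $X > Y$, yet the natural bipartition of $K_{p,p}$ has equal parts of size $p$. Since $K_{p,p}$ is connected and vertex-transitive within each part, any $2$-coloring of a local antimagic labeling partitions the $2p$ vertices into two monochromatic independent sets; the only independent sets realizing a proper $2$-coloring are the two sides of the bipartition, which have equal size $p$. This contradicts the strict inequality $X > Y$ in Lemma~\ref{lem-2part}, so $\chi_{la}(K_{p,p}) \ge 3$.

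For the upper bound, the goal is to exhibit, for each $p \ge 3$, a local antimagic labeling realizing exactly $3$ colors. Following the matrix framework used throughout the paper, I would represent the labeling by a $p \times p$ matrix $M$ whose entries are a permutation of $[1, p^2]$ (the edge labels of $K_{p,p}$), where the $i$-th row sum is the induced label of the $i$-th left vertex and the $j$-th column sum is the induced label of the $j$-th right vertex. To force exactly $3$ colors, I would aim for a matrix in which all row sums equal a common value $R$, all column sums equal a common value $C$, except that I deliberately break the symmetry at one position so that a single row (or the structure) yields a third distinct color. The cleanest route is to start from a $p \times p$ magic square (which exists for all $p \ge 3$) giving constant row and column sum $\tfrac{p(p^2+1)}{2}$ — but a pure magic square gives only one color, which is \emph{not} a valid local antimagic labeling since adjacent vertices would share it. So instead I would perturb: swap two entries lying in different rows and different columns to split the spectrum into exactly three sums, $R_1, R_2$ for the two affected rows and $C_1, C_2$ for the two affected columns, while keeping all unaffected rows and columns at the magic constant, then verify these reduce to at most three distinct values with no adjacency conflict (every row vertex is adjacent only to column vertices, so I only need row-colors to differ from column-colors).

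The main obstacle will be arranging the perturbation so that the resulting multiset of row and column sums collapses to \emph{exactly} three distinct values while guaranteeing that no row sum coincides with any column sum (adjacency in $K_{p,p}$ runs precisely between a row vertex and a column vertex, so the constraint $f^+(\text{row}) \ne f^+(\text{column})$ must hold for \emph{every} such pair). A single transposition generically produces four new sums, so I would likely need a coordinated pair of swaps, or a small explicit construction handling $p=3$ directly and then a parametrized family for $p \ge 4$ (possibly splitting by parity of $p$ as the paper does elsewhere, and invoking the existence of magic squares or magic rectangles of the appropriate dimensions). I would close by checking the three color-sum identities explicitly and confirming the bijection property, after which the matching lower bound $\chi_{la}(K_{p,p}) \ge 3$ forces equality.
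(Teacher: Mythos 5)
Your lower bound argument is correct and matches the paper: Lemma~\ref{lem-2part} forces a strict inequality $X>Y$ between the sizes of the two color classes of any $2$-colored local antimagic labeling, which is incompatible with the equal bipartition of $K_{p,p}$, so $\chi_{la}(K_{p,p})\ge 3$.

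The upper bound, however, has a genuine gap: you never produce the construction, and the perturbation strategy you sketch does not work as described. Note first that if \emph{all} row sums equal $R$ and \emph{all} column sums equal $C$ in a $p\times p$ matrix with entries $[1,p^2]$, then $pR=pC=\tfrac{1}{2}p^2(p^2+1)$ forces $R=C$, i.e.\ one color; so a third color must come from splitting one side (rows or columns) into exactly two values while the other side stays constant at a value different from both. A single swap of two entries in distinct rows and columns of a magic square changes two row sums and two column sums by $\pm d$ but leaves every \emph{other} row and column at the magic constant --- and since every row vertex is adjacent to every column vertex in $K_{p,p}$, those unaffected rows and columns now share a color, which is fatal. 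You flag this obstacle but leave it unresolved, and resolving it is the heart of the proof. The paper's constructions do exactly the needed one-sided split: for even $p\ge 4$ it juxtaposes a $p\times 2$ magic rectangle on $[1,2p]$ with a $p\times(p-2)$ magic rectangle on $[2p+1,p^2]$, so all $p$ row sums agree while the column sums take exactly two values, all three values distinct; for odd $p=2n+1$ it takes the Siamese-method magic square, observes that its $(n+1)$-st column is the arithmetic progression $1, p+2, \dots, p^2$ with common difference $p+1$, and cyclically shifts that column up by one entry --- every column sum is preserved, $p-1$ row sums increase by $p+1$, and the last row sum drops by $p^2-1$, again yielding exactly three pairwise distinct values. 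Without such an explicit mechanism your proposal establishes only the lower bound.
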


\begin{proof} By Lemma~\ref{lem-2part}, $\chi_{la}(K_{p,p})\ge 3$. Suppose $p\ge 4$ is even. Let $A$ be a magic rectangle of size $p\times 2$ by using integers in $[1, 2p]$ and $B$ be a magic rectangle of size $p\times (p-2)$ by using integers in $[2p+1, p^2]$. Note that, for the construction of magic rectangles, one may find from \cite{Chai, Reyes}. Now, for $M=\begin{pmatrix}A& B\end{pmatrix}$, each row sum is $\frac{1}{2}p(p^2+1)$, each of the first two column sums is $\frac{1}{2}p(2p+1)$, and each other column sum is $\frac{1}{2}p(p+1)^2$. Thus, $\chi_{la}(K_{p,p}) = 3$ for even $p\ge 2$.

Suppose $p=2n+1\ge 3$ is odd. Consider the $(2n+1)\times (2n+1)$ magic square $A$ constructed by Siamese method:\\
 Starting from the $(1, n+1)$-entry (i.e, $A_{1, n+1}$) with the number 1, the fundamental movement for filling the entries is diagonally up and right, one step at a time. When a move would leave the matrix, it is wrapped around to the last row or first column, respectively.
If a filled entry is encountered, one moves vertically down one box instead, then continuing as before. One may find the detail in \cite{Kraitchik}.

Note that each of the ranges $[1,p]$, $[p+1,2p]$, $\dots$, $[p^2-p+1,p^2]$ occupies a diagonal of the matrix, wrapping at the edges.
Namely, the range $[1,p]$ starts at $A_{1, n+1}$ and ends at $A_{2, n}$; the range $[p+1,2p]$ starts at $A_{3, n}$ ends at $A_{4, n-1}$; the range $[2p+1,3p]$ starts at $A_{5, n-1}$ and ends at $A_{6, n-2}$, etc. In general, the range $[ip+1,(i+1)p]$ starts at $A_{2i+1, n+1-i}$ and ends at $A_{2i+2, n-i}$, where $0\le i\le p-1$ and the indices are taken modulo $p$. It is easy to see that the $(n+1)$-st column of $A$ is $(1, p+2, \dots, p^2)$ which is an arithmetic sequence with common difference $p+1$.

Now let $M$ be the matrix obtained from $A$ by shifting up the $(n+1)$-st column by one entry (the top entry moves to the bottom). Hence each column sum of $M$ is still the magic number $\frac{1}{2}p(p^2+1)$. Each row sum of $M$ is $\frac{1}{2}p(p^2+1)+p+1$ except the last row sum which is $\frac{1}{2}p(p^2+1)-p^2+1$. Thus, we conclude that $\chi_{la}(K_{p,p})=3$.
\end{proof}

\begin{example} Suppose $p=5$. We have the following magic square of order $5$:

\centerline{$A=\begin{pmatrix}
17 & 24 & 1 & 8 & 15\\
23 & 5 & 7 & 14 & 16\\
4 & 6 & 13 & 20 & 22\\
10 & 12 & 19 & 21 & 3\\
11 & 18 & 25 & 2 & 9
\end{pmatrix}$}
Now

\centerline{$M=\begin{array}{|ccccc|c}
17 & 24 & 7 & 8 & 15 & 71\\
23 & 5 & 13 & 14 & 16 & 71\\
4 & 6 & 19 & 20 & 22 & 71\\
10 & 12 & 25 & 21 & 3 & 71\\
11 & 18 & 1 & 2 & 9 & 41\\\hline
65 & 65 & 65 & 65 & 65 & \mbox{sum}
\end{array}$}

\end{example}

\begin{theorem} For $p,q\ge 3$ and $p\not\equiv q\pmod{2}$, $\chi_{la}(K_{p,q}) = 3$.
\end{theorem}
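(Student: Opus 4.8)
The plan is to establish the lower bound $\chi_{la}(K_{p,q})\ge 3$ through Lemma~\ref{lem-2part} and a parity/integrality obstruction, and then to meet it with an explicit $3$-coloring assembled from two magic rectangles placed side by side.

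For the lower bound, suppose for contradiction that some local antimagic labeling of $K_{p,q}$ induces a $2$-coloring. Since every vertex of one part is adjacent to every vertex of the other, each part must be monochromatic and the two parts carry the two distinct colors; thus in Lemma~\ref{lem-2part} the part sizes are $\{X,Y\}=\{p,q\}$, the size of the graph is $pq$, and the two colors $x<y$ satisfy $xX=yY=\frac{pq(pq+1)}{2}$. In particular the color common to the part of size $q$ equals $\frac{pq(pq+1)}{2q}=\frac{p(pq+1)}{2}$. As $p\not\equiv q\pmod 2$, we may assume $p$ is odd; then $pq$ is even, $pq+1$ is odd, and $p(pq+1)$ is a product of two odd numbers, so $\frac{p(pq+1)}{2}\notin\mathbb{Z}$, a contradiction. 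Hence $\chi_{la}(K_{p,q})\ge 3$.

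For the upper bound I would represent a labeling by filling $[1,pq]$ into a matrix $M$ whose rows index one part and columns the other, so that the row sums are the induced colors of one part and the column sums those of the other; adjacency forces every row sum to differ from every column sum, while coincidences within a part are harmless. Since $p\not\equiv q$, by $K_{p,q}\cong K_{q,p}$ I may assume $p$ is odd and $q$ is even, and take $M$ of size $p\times q$. Because a $p\times q$ magic rectangle does not exist when $p\not\equiv q$, the idea is to split the even width as $q=q_1+q_2$ with $q_1,q_2$ odd (hence $\ge 3$, possible exactly when $q\ge 6$), take a $p\times q_1$ magic rectangle $A$ on $[1,pq_1]$ and a $p\times q_2$ magic rectangle $B$ on $[pq_1+1,pq]$ (both exist by \cite{Chai,Reyes}, each pair of dimensions being odd), and set $M=(A\mid B)$. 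Every row sum is then $\frac{q_1(pq_1+1)}{2}+(\text{row sum of }B)=\frac{q(pq+1)}{2}$, a single constant $R$; meanwhile all columns of $A$ share one value $s_1=\frac{p(pq_1+1)}{2}$ and all columns of $B$ share a strictly larger value $s_2$, since $B$ uses the larger labels. Thus the column sums realize exactly the two values $s_1<s_2$, and it remains only to check the routine inequalities $R\ne s_1$ and $R\ne s_2$, yielding a labeling with exactly three colors that matches the lower bound.

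The main obstacle is the residual small case in which the even part equals $4$ (the graphs $K_{4,m}$ with $m\ge 3$ odd), since $4$ cannot be written as a sum of two odd integers each at least $3$, so the side-by-side decomposition breaks down. Here I would instead exploit complementary pairs $\{k,4m+1-k\}$, each summing to $4m+1$: placing two such pairs in every column makes all $m$ column sums equal to $2(4m+1)$, and distributing the pairs among the four rows so that the four row sums take exactly two values (both different from $2(4m+1)$) again produces three colors. A final verification that the three attained values are pairwise distinct, together with the inequality checks in the generic case, completes the argument; this bookkeeping, rather than any conceptual difficulty, is where the real work lies.
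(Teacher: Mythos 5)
Your overall strategy coincides with the paper's: obtain the lower bound from the impossibility of a two-colour (i.e.\ magic-rectangle) solution, and realise three colours by decomposing the even dimension into two odd pieces and juxtaposing two magic rectangles built on consecutive label ranges. Your lower-bound argument via Lemma~\ref{lem-2part} is a correct, self-contained version of the paper's appeal to the nonexistence of $p\times q$ magic rectangles when $p\not\equiv q\pmod 2$, and your generic construction $M=(A\mid B)$ with $q=q_1+q_2$, $q_1,q_2$ odd and at least $3$, is essentially the transpose of the paper's $M=\binom{A}{B}$ with the even dimension split as $3+(p-3)$; your computation that every row sum equals the single constant $R=\frac{q(pq+1)}{2}$ is right. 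However, you leave two things unproved. First, the ``routine inequalities'' $R\ne s_1$ and $R\ne s_2$ are asserted, not checked; they do hold, but not for free --- for instance $R=s_1$ forces $pq^2+q=p^2q_1+p$, hence $p\mid q$ and so $q\ge 2p$, which is then incompatible with a size comparison. The paper spends several lines on the analogous verification (including the borderline case where the two part sizes differ by one), so this step needs to be written out.

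Second, and more seriously, the case where the even side equals $4$ is only a plan, not a proof. You correctly identify that the side-by-side decomposition fails there and that complementary pairs $\{k,\,4m+1-k\}$ placed two per column force all column sums to equal $2(4m+1)$, but the entire content of that case is exhibiting a distribution of the $2m$ pairs into a $4\times m$ array whose four row sums take exactly two values, both different from $2(4m+1)$ --- and you do not exhibit one. The paper's treatment shows this is not automatic: it starts from rows of odd and even labels in arithmetic progression (which give four \emph{distinct} row sums) and then performs specific entry swaps, different according to whether the odd side is $\equiv 1$ or $\equiv 3\pmod 4$, precisely to collapse those four values to two. Until such a distribution is displayed, or its existence argued, the theorem is not established for $K_{4,q}$ with $q\ge 3$ odd.
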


\begin{proof} If $\chi_{la}(K_{p,q})=2$, then the corresponding matrix $M$ is a magic rectangle. Since there is no magic rectangle of size $p\times q$ for $p\not\equiv q\pmod{2}$, we know $\chi_{la}(K_{p,q})\ge 3$. Without loss of generality, assume $q\ge 3$ is odd and $p\ge 4$ is even.

Consider $p\ge 6$. Let $A$ be a $3\times q$ magic rectangle using integer in $[1,3q]$ and $B$ be a $(p-3)\times q$ magic rectangle  using integers in $[3q+1,qp]$. Let $M=\begin{pmatrix} A \\ B\end{pmatrix}$. Thus, each column sum of $M$ is $y=\frac{1}{2}(qp+1)p$, each of the first three row sums of $M$ is $x=\frac{1}{2}(3q+1)q$, and each other row sum of $M$ is $z=\frac{1}{2}(qp+3q+1)q$. Clearly $x<z$.

Suppose $p> q$. It is easy to see that $x<y$.
Consider $2(y-z)=(qp+1)(p-q)-3q^2$. If $p\ge q+3$, then $y-z>0$. Suffice to consider $p=q+1$. In this case, $2(y-z)=-2q^2+q+1\ne 0$ when $q\ge 3$. So $M$ corresponds a local antimagic labeling of $K_{p,q}$ for this case.

Suppose $q> p$. $2(x-y)=3q^2+(1-p^2)q-p\equiv -p\pmod{q}$. So $x-y\ne 0$ since $q>p\ge 6$. Now  $2(y-z)=(qp+1)(p-q)-3q^2<0$. So $M$ corresponds a local antimagic labeling of $K_{p,q}$ for this case.

The remaining case is when $p=4$. Let $A$ be a $4\times q$ matrix whose first row is the sequence of odd integers in $[1, 2q]$ in natural order; second row is the sequence of even integers in $[2q+1, 4q]$ in reverse natural order; third row is the sequence of even integers in $[1, 2q]$ in natural order; last row is the sequence of odd integers in $[2q+1, 4q]$ in reverse natural order. It is clear that each column sum is $2(4q+1)$, the first row sum is $q^2$, the second row sum is $3q^2+q$, the third row sum is $q^2+q$, and the last row sum is $3q^2$.

Suppose $q\equiv 3\pmod 4$.
Now  $A_{1,(3q+3)/4}$ (the $(1,(3q+3)/4)$-entry of $A$) and $A_{2,(3q+3)/4}$ are $\begin{pmatrix}(3q+1)/2\\ (5q+1)/2\end{pmatrix}$, respectively. Swap these two entries to obtain a matrix $M$. Thus, the first row sum of $M$ is $q^2+q$, the second row sum is $3q^2$, the third row sum is $q^2+q$, and the last row sum is $3q^2$.

Suppose $q\equiv 1\pmod 4$.
Now the $A_{1,(3q+5)/4}$ and $A_{2,(3q+5)/2}$ are $\begin{pmatrix}(3q+3)/2\\ (5q-1)/2\end{pmatrix}$, respectively. To obtain $M$ we swap these two entries first, and then swap $A_{1,1}$ with $A_{3,1}$ and swap $A_{2,1}$ with $A_{4,1}$. Now, the first row sum of $M$ is $q^2+q-1$, the second row sum is $3q^2+1$, the third row sum is $q^2+q-1$, and the last row sum is $3q^2+1$.

It is easy to see that the column sum $8q+2$ cannot be equal to each row sum. Hence this completes the proof. \end{proof}

\begin{example} Consider the graph $K_{4,7}$.
Let
\[A=\begin{array}{|*{7}{c}|c}
1 & 3 & 5 & 7 & 9 & 11 & 13 & 49\\
28 & 26 & 24 & 22 & 20 & 18 & 16 & 154\\
2 & 4 & 6 & 8 & 10 & 12 & 14 & 56\\
27 & 25 & 23 & 21 & 19 & 17 & 15 & 147\\\hline
58 & 58 & 58 & 58 & 58 & 58 & 58 & \mbox{Sum}\end{array}\]
After swapping $A_{1, 6}$ with $A_{2,6}$ we have
\[M=\begin{array}{|*{7}{c}|c}
1 & 3 & 5 & 7 & 9 & 18 & 13 & 56\\
28 & 26 & 24 & 22 & 20 & 11 & 16 & 147\\
2 & 4 & 6 & 8 & 10 & 12 & 14 & 56\\
27 & 25 & 23 & 21 & 19 & 17 & 15 & 147\\\hline
58 & 58 & 58 & 58 & 58 & 58 & 58 & \mbox{Sum}\end{array}\]

Next we consider the graph $K_{4,5}$.
Let
\[A=\begin{array}{|*{5}{c}|c}
1 & 3 & 5 & 7 & 9 & 25\\
20 & 18 & 16 & 14 & 12 & 80\\
2 & 4 & 6 & 8 & 10& 30\\
19 & 17 & 15 & 13 & 11 & 75\\\hline
42 & 42 & 42 & 42 & 42  & \mbox{Sum}\end{array}\]
After swapping the $A_{1, 5}$ with $A_{2,5}$, $A_{1, 1}$ with $A_{3,1}$, and $A_{2, 1}$ with $A_{4,1}$ we have
\[M=\begin{array}{|*{5}{c}|c}
2 & 3 & 5 & 7 & 12 & 29\\
19 & 18 & 16 & 14 & 9 & 76\\
1 & 4 & 6 & 8 & 10& 29\\
20 & 17 & 15 & 13 & 11 & 76\\\hline
42 & 42 & 42 & 42 & 42  & \mbox{Sum}\end{array}\]
\end{example}

\begin{corollary} For $q\ge p\ge 1$ and $q\ge 2$,
$$\chi_{la}(K_{p,q})=\begin{cases} q+1 &\mbox{ if } q > p = 1,\\
 2 &\mbox{ if } q>p\ge 2 \mbox{ and } p\equiv q\pmod{2},\\
 3 &\mbox{ otherwise}.\end{cases}$$

%For $p,q\ge 1$ and $(q,p)\ne (1,1)$, $$\fontsize{11}{12}\selectfont\chi_{la}(K_{p,q})=\begin{cases} q+1 &\mbox{ if } q > p = 1,\\
%3 &\mbox{ if } p=q \mbox{ or } p\not\equiv q\pmod{2} \mbox{ is odd},\\
% 2 & otherwise.\end{cases}$$
\end{corollary}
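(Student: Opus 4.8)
The plan is to establish this Corollary by a case analysis that simply reassembles the three results of this section. Under the standing hypotheses $q \ge p \ge 1$ and $q \ge 2$, the first two branches of the formula are immediate from Theorem~\ref{thm-Kpq}: the star case $q > p = 1$ gives $\chi_{la}(K_{1,q}) = q+1$, and the case $q > p \ge 2$ with $p \equiv q \pmod 2$ gives the value $2$. All the work is therefore in checking that the ``otherwise'' alternative is exactly the set of remaining configurations and that every one of them forces the value $3$.

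First I would identify the complement. Within the domain $q \ge p \ge 1$, $q \ge 2$, the configurations not covered by the first two branches are precisely those with $p = q \ge 2$ together with those having $q > p \ge 2$ and $p \not\equiv q \pmod 2$. For the diagonal part $p = q$, Theorem~\ref{thm-Kpq} supplies the value $3$ when $p = q = 2$, and Theorem~\ref{thm-Kpp} supplies it when $p = q \ge 3$. For the off-diagonal part $q > p \ge 2$ with opposite parities, I would split on the size of $p$: if $p = 2$ then the parity condition forces $q$ to be odd, so Theorem~\ref{thm-Kpq} again returns $3$; if $p \ge 3$ then $p, q \ge 3$ with $p \not\equiv q \pmod 2$, and the value $3$ is delivered by the immediately preceding theorem. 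Matching each configuration to a prior result completes the verification.

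There is no genuine obstacle here, only bookkeeping; the single point demanding care is ensuring the subcase partition is exhaustive and disjoint. In particular one must confirm that the boundary instance $p = q = 2$ falls under the diagonal $p = q$ branch and is not conflated with the $p \equiv q$ branch of the formula (which requires the strict inequality $q > p$), and that the instance $p = 2$ with $q$ odd is routed to Theorem~\ref{thm-Kpq} rather than to the $p, q \ge 3$ theorem whose hypotheses it fails to meet.
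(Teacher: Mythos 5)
Your proposal is correct and matches the paper's (implicit) justification: the corollary is stated there without proof precisely because it is the straightforward assembly of Theorem~\ref{thm-Kpq}, Theorem~\ref{thm-Kpp}, and the preceding theorem on $K_{p,q}$ with $p,q\ge 3$ and $p\not\equiv q\pmod 2$, exactly as you lay out. Your case split is exhaustive and correctly routes the boundary instances $p=q=2$ and $p=2$, $q$ odd to Theorem~\ref{thm-Kpq}.
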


\noindent {\bf Acknowledgements}  This paper was initiated during the first author's visit to Harbin Engineering University.  He is grateful to the university for providing full financial support and to Universiti Technologi MARA for granting the leaves. He would also like to dedicate this paper to Prof Yee-Hock Peng on the occasion of his 65th birthday.

% Non-BibTeX users please use

\end{document}